\numberwithin{equation}{section}
\newtheorem{theorem}{Theorem}[section]
\newtheorem{lemma}[theorem]{Lemma}
\newtheorem{prop}[theorem]{Proposition}
\theoremstyle{definition}
\newtheorem{remark}[theorem]{Remark}
\theoremstyle{definition}
\theoremstyle{definition}
\def\dashint{\operatorname%
{\,\,\text{\bf-}\kern-.98em\DOTSI\intop\ilimits@\!\!}}
\newcommand{\RN}[1]{%
  \textup{\uppercase\expandafter{\romannumeral#1}}%
}
\newcounter{marnote}
\begin{document}
\title[Entire solutions of Hessian equations]
{Liouville property and existence of entire solutions of Hessian equations}

\author[C. Wang]{Cong Wang}
\address[C. Wang]{School of Mathematical Sciences, Beijing Normal University, Laboratory of Mathematics and Complex Systems, Ministry of Education, Beijing 100875, China.}
\email{cwang@mail.bnu.edu.cn}

\author[J.G. Bao]{Jiguang Bao}
\address[J.G. Bao]{School of Mathematical Sciences, Beijing Normal University, Laboratory of Mathematics and Complex Systems, Ministry of Education, Beijing 100875, China.}
\email{jgbao@bnu.edu.cn}
\thanks{J.G. Bao was supported by the National Natural Science Foundation of China (No. 11871102)}




\keywords{Liouville property, Entire solutions, Existence and uniqueness, Hessian equations, Prescribed asymptotic behavior}

\subjclass[2020]{35J60, 53C24}

\begin{abstract}
In this paper, we establish the existence and uniqueness theorem for entire solutions of Hessian equations with prescribed asymptotic behavior at infinity. This extends the previous results on Monge-Amp\`{e}re equations.  Our approach also makes the prescribed asymptotic order optimal within the range preset in exterior Dirichlet problems. In addition, we show a Liouville type result for $k$-convex solutions. This partly removes the $(k+1)$- or $n$-convexity restriction imposed in existing work.
\end{abstract}

\maketitle

\section{Introduction}

In this paper, we study the Liouville property of Hessian equations
\begin{equation}\label{eq:sigma_k=1}
\sigma_k(\lambda(D^2u))=1
\end{equation}
and the existence of solutions of
\begin{equation}\label{eq:k-Hessian=f}
\sigma_k(\lambda(D^2u))=f
\end{equation}
in $\mathbb{R}^n$ with $f$ a perturbation of $1$ near infinity. Here $\lambda(D^2u)$ denotes the eigenvalue vector $\lambda:=(\lambda_1,\cdots,\lambda_n)$ of the Hessian matrix $D^2u$,
$$\sigma_k(\lambda)=\sum_{1\leq i_1<\cdots<i_k\leq n}\lambda_{i_1}\cdots\lambda_{i_k}$$
is the $k$-th elementary symmetric function, $k=1,\cdots,n$. Note that for $k=1$, \eqref{eq:k-Hessian=f} corresponds to Possion's equation $\Delta u=f$, which is a linear elliptic equation. For $2\leq k\leq n$, \eqref{eq:k-Hessian=f} is an important class of fully nonlinear elliptic equations. Especially, for $k=n$, it corresponds to the famous Monge-Amp\`{e}re equation $\text{det}(D^2u)=f$.

For the case that the right hand side $f\equiv1$, continuing attention is paid to Liouville properties of \eqref{eq:sigma_k=1}. For $k=1$, the classical Liouville theorem for harmonic functions shows that any convex entire classical solution of \eqref{eq:sigma_k=1} must be a quadratic polynomial. For $k=n$, the celebrated theorem of J\"{o}rgens \cite{J 1954}, Calabi \cite{C 1958} and Pogorelov \cite{P 1972} for the Monge-Amp\`{e}re equation states that any convex entire classical solution of \eqref{eq:sigma_k=1} must be a quadratic polynomial. Different proofs of J\"{o}rgens-Calabi-Pogorelov theorem were given by Cheng-Yau \cite{CT 1986}, Caffarelli \cite{C 1995} and Jost-Xin \cite{JX 2001}. For $k=2$, Chang-Yuan \cite{CY 2010} proved that if $u$ is an entire solution of \eqref{eq:sigma_k=1} with
$$D^2u\geq\left(\delta-\sqrt{\frac{2}{n(n-1)}}\right)I$$
for some $\delta>0$, then $u$ is a quadratic polynomial. Recently, Shankar-Yuan \cite{SY 2022} improved this result to general semiconvex solutions, i.e.
$$D^2u\geq -KI$$
for a large $K>0$. For general $k$, assuming a lower quadratic growth condition, Bao-Chen-Guan-Ji \cite{BCGJ 2003} demonstrated that any strictly convex solution of \eqref{eq:sigma_k=1} is a quadratic polynomial. Afterwards, Li-Ren-Wang \cite{LRW 2016} relaxed the strict convexity restriction in \cite{BCGJ 2003} to $(k+1)$-convexity. However, to the best of our knowledge, there is little known about the Liouville type result for $k$-convex solutions of \eqref{eq:sigma_k=1}. We refer to \cite{B 2004}, where Bao established such a result by introducing a integral growth condition for $D^2u$.

For the general right hand side, Caffarelli-Li \cite{CL 2003} generalized the J\"{o}rgens-Calabi-Pogorelov theorem. They considered
\begin{equation}\label{eq:det=f}
\text{det}(D^2u)=f\quad\text{in}\ \mathbb{R}^n,
\end{equation}
where $f\in C^0(\mathbb{R}^n)$ satisfies
$$\inf_{\mathbb{R}^n}f>0\quad\text{and}\quad\mathrm{supp}(f-1)\ \text{is~bounded}.$$
For $n\geq3$, they proved that any convex viscosity solution of \eqref{eq:det=f} approaches a quadratic polynomial at infinity with
\begin{equation}\label{det-asy}
\limsup_{|x|\rightarrow\infty}|x|^{n-2}\left|u(x)-\left(\frac{1}{2}x^TAx+b\cdot x+c\right)\right|<\infty,
\end{equation}
where $A$ is a symmetric positive definite matrix with $\text{det}(A)=1$, $b\in\mathbb{R}^n$ and $c\in\mathbb{R}$. With such prescribed asymptotic behavior near infinity, they also established an existence and uniqueness theorem for solutions of \eqref{eq:det=f}. Extending \cite{CL 2003}, Bao-Li-Zhang \cite{BLZ 2015} considered \eqref{eq:det=f} where $f\in C^0(\mathbb{R}^n)$ satisfies
\begin{subequations}
\begin{align}
&\inf_{\mathbb{R}^n}f>0\ \text{and}\ f\in C^3(\mathbb{R}^n\setminus D),\label{C1}\\
&\exists\ \beta>2\ \text{such~that}\ \limsup_{|x|\rightarrow\infty}|x|^{\beta+m}|D^m(f(x)-1)|<\infty,\ m=0,1,2,3,\label{C2}
\end{align}
\end{subequations}
where $D\subset\mathbb{R}^n$ is a bounded open set in \eqref{C1}.  In the same spirit as \cite{CL 2003}, they derived asymptotic behavior of solutions near infinity,
\begin{equation}\label{eq:asymptotic}
\begin{cases}
\limsup_{|x|\rightarrow\infty}|x|^{\min\{\beta,n\}-2}\left|u(x)-\left(\frac{1}{2}x^TAx+b\cdot x+c\right)\right|<\infty,&\text{if}\ \beta\neq n,\\
\limsup_{|x|\rightarrow\infty}|x|^{n-2}(\ln|x|)^{-1}\left|u(x)-\left(\frac{1}{2}x^TAx+b\cdot x+c\right)\right|<\infty,&\text{if}\ \beta=n.
\end{cases}
\end{equation}
Also, with this prescribed asymptotic behavior, they proved the existence and uniqueness theorem for solutions of \eqref{eq:det=f} (the case $\beta=n$ was missed in the original paper \cite{BLZ 2015}). Note that the above asymptotic results are extensions for J\"{o}rgens-Calabi-Pogorelov theorem. In dimension two, the asymptotic results were studied by Ferrer–Mart\'{\i}nez–Mil\'{a}n \cite{FMM 2000} and Bao-Li-Zhang \cite{BLZ 2015}. See also Bao-Xiong-Zhou \cite{BXZ 2019} for the existence of solutions of \eqref{eq:det=f} with prescribed asymptotic behavior.

We would like to mention a further extension of J\"{o}rgens-Calabi-Pogorelov theorem. In another paper \cite{CL 2004}, Caffarelli-Li proved that any classical convex solution of \eqref{eq:det=f} with a periodic positive $f$ must be sum of a quadratic polynomial and a periodic function. See also the work of Teixeira-Zhang \cite{TZ 2016} for a perturbation of periodic positive functions $f$.

In this paper, we focus our attention on Hessian equations. We obtain a Liouville property for $k$-convex entire solutions of \eqref{eq:sigma_k=1}. Under an additional upper quadratic growth condition, this removes the assumption of $(k+1)$- or $n$-convexity in \cite{LRW 2016} and \cite{BCGJ 2003}, respectively. Furthermore, we prove the existence and uniqueness for entire solutions of \eqref{eq:k-Hessian=f} with prescribed asymptotic behavior \eqref{eq:asymptotic} at infinity. This generalizes the previous corresponding results (see \cite{CL 2003} and \cite{BLZ 2015}) on Monge-Amp\`{e}re equations to Hessian equations.

Recall that (see \cite{CNS 1985}) for an open set $\Omega\subset\mathbb{R}^n$, we say that a function $u\in C^2(\Omega)$ is $k$-convex, if $\lambda(D^2u(x))\in\overline{\Gamma}_k$ for every $x\in\Omega$, where $\Gamma_k$ is an open convex symmetric cone in $\mathbb{R}^n$ with its vertex at the origin, given by
$$\Gamma_k=\{(\lambda_1,\lambda_2,\cdots,\lambda_n)\in\mathbb{R}^n: \sigma_j(\lambda)>0,\  \forall\ j=1,\cdots,k\}.$$

Our first main result is

\begin{theorem}\label{thm:main}
Let $u\in C^{3,1}(\mathbb{R}^n)$ be a $k$-convex solution of \eqref{eq:sigma_k=1} in $\mathbb{R}^n$. If there exist positive constants $A_1$, $A_2$, $B$ and $R_0$ such that
\begin{equation}\label{eq:quad-growth2}
A_1|x|^2\leq u(x)\leq A_2|x|^2+B\quad\text{for}\ |x|\geq R_0.
\end{equation}
Then $u$ is a quadratic polynomial.
\end{theorem}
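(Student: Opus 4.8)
\emph{Proof strategy.} First I would dispose of $k=1$ separately: then \eqref{eq:sigma_k=1} reads $\Delta u=1$, so $u-\tfrac{1}{2n}|x|^{2}$ is harmonic on $\mathbb{R}^{n}$ with at most quadratic growth by \eqref{eq:quad-growth2}, hence a polynomial of degree at most $2$ by the classical Liouville theorem, and we are done. From now on $2\le k\le n$. In this range the plan is to show that $D^{2}u$ has at most the integral growth required to invoke the Liouville-type theorem of Bao \cite{B 2004} for $k$-convex entire solutions; the point is that the two-sided quadratic pinching \eqref{eq:quad-growth2} is exactly what produces this. Two elementary observations drive the argument. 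Since $u$ is $k$-convex with $k\ge 2$, it is in particular $2$-convex, so $\sigma_{2}(\lambda(D^{2}u))\ge 0$ and therefore
\[
|D^{2}u|^{2}=\sum_{i}\lambda_{i}(D^{2}u)^{2}=\sigma_{1}(\lambda(D^{2}u))^{2}-2\sigma_{2}(\lambda(D^{2}u))\le(\Delta u)^{2},
\]
i.e. $|D^{2}u|\le\Delta u$ pointwise on $\mathbb{R}^{n}$. Moreover, applying Maclaurin's inequality to $\lambda(D^{2}u)\in\overline{\Gamma}_{k}$ together with \eqref{eq:sigma_k=1} gives $\Delta u=\sigma_{1}(\lambda(D^{2}u))\ge n\binom{n}{k}^{-1/k}>0$, so $u$ is strictly subharmonic (and $\lambda(D^{2}u)$ stays in the open cone $\Gamma_{k}$ everywhere).

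Next I would estimate the mass of $\Delta u$ on balls. Writing $M(r):=\tfrac{1}{|\partial B_{r}|}\int_{\partial B_{r}}u\,dS$, one has $|\partial B_{r}|\,M'(r)=\int_{B_{r}}\Delta u$ and $\tfrac{d}{dr}\big(|\partial B_{r}|\,M'(r)\big)=\int_{\partial B_{r}}\Delta u\ge 0$, so $r\mapsto r^{\,n-1}M'(r)$ is nondecreasing. Hence, for $R$ large, integrating $M'(r)\ge (R/r)^{n-1}M'(R)$ over $r\in[R,2R]$ yields $c\,R\,M'(R)\le M(2R)-M(R)$ with $c=c(n)>0$, while \eqref{eq:quad-growth2} gives $M(2R)-M(R)\le CR^{2}$. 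Combining these with $|D^{2}u|\le\Delta u$,
\[
\int_{B_{R}}|D^{2}u|\,dx\ \le\ \int_{B_{R}}\Delta u\,dx\ =\ |\partial B_{R}|\,M'(R)\ \le\ C'R^{\,n}\qquad\text{for }R\text{ large}.
\]
This is exactly an integral growth bound of the type under which \cite{B 2004} shows that a $k$-convex entire solution of \eqref{eq:sigma_k=1} is a quadratic polynomial, which would complete the proof (the lower bound in \eqref{eq:quad-growth2} additionally pins the leading term so that the resulting polynomial is genuinely quadratic).

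Should one prefer a self-contained argument, the same bound ought to suffice directly. Differentiating \eqref{eq:sigma_k=1} twice and using the concavity of $\sigma_{k}^{1/k}$ on $\Gamma_{k}$, every pure second derivative $u_{ee}$ is a subsolution of the linearized operator $L:=\tfrac1k\,\sigma_{k}^{ij}(D^{2}u)\,\partial_{ij}$, where $\sigma_{k}^{ij}:=\partial\sigma_{k}(\lambda(D^{2}u))/\partial u_{ij}$; this operator is in divergence form since $\partial_{j}\sigma_{k}^{ij}=0$, with trace $\operatorname{tr}\sigma_{k}^{ij}=(n-k+1)\sigma_{k-1}(\lambda(D^{2}u))$ bounded below by a positive constant (Maclaurin again) and above by $C(\Delta u)^{k-1}$ (using $|D^{2}u|\le\Delta u$). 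One would then feed $\int_{B_{R}}\Delta u\le CR^{n}$ into a Moser-type iteration for the subsolution $\Delta u=\sum_{e}u_{ee}$ to obtain $\sup_{\mathbb{R}^{n}}\Delta u<\infty$, hence $\|D^{2}u\|_{L^{\infty}(\mathbb{R}^{n})}<\infty$, and finally run a blow-down $u_{R}(x):=R^{-2}u(Rx)$ (now with uniform $C^{1,1}$ bounds), converging in $C^{1,\alpha}_{\mathrm{loc}}$ to an entire $k$-convex solution of \eqref{eq:sigma_k=1} squeezed between $A_{1}|x|^{2}$ and $A_{2}|x|^{2}$; the strong maximum principle applied to the $L$-subsolutions $u_{ee}$ then forces $D^{2}u$ to be constant.

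The passage from \eqref{eq:quad-growth2} to $\int_{B_{R}}|D^{2}u|\le CR^{n}$ is the soft part, resting only on $2$-convexity and subharmonicity. The main obstacle is the rigidity step — turning that integral bound into the conclusion that $u$ is a polynomial — because the linearization of $\sigma_{k}$ along a \emph{merely} $k$-convex solution is genuinely, not uniformly, elliptic, which is precisely why \cite{LRW 2016} and \cite{BCGJ 2003} imposed $(k+1)$- or $n$-convexity; consequently neither the De Giorgi--Nash--Moser machinery nor the Evans--Krylov theorem applies off the shelf, and one must exploit the divergence structure of $\sigma_{k}^{ij}$ together with the concavity of $\sigma_{k}^{1/k}$ (or, via the shortcut above, the full strength of \cite{B 2004}).
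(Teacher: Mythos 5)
There is a genuine gap, and it is precisely the one you flag at the end of your own proposal: you never actually close the rigidity step. Your ``soft'' part is fine — for $k\ge 2$, $k$-convexity gives $2$-convexity, hence $|D^{2}u|\le\Delta u$ pointwise, and the averaged mean-value/monotonicity argument with the two-sided bound \eqref{eq:quad-growth2} indeed yields $\int_{B_{R}}|D^{2}u|\le\int_{B_{R}}\Delta u\le CR^{n}$. But the next step is the whole theorem, and you handle it by asserting that this $L^{1}$-average bound ``is exactly'' the hypothesis of Bao's Liouville theorem in \cite{B 2004}. That is almost certainly not so: if $\sup_{R}R^{-n}\int_{B_{R}}\Delta u<\infty$ (which, as you show, follows merely from $2$-convexity and the upper quadratic growth) already implied rigidity, then the present theorem would have been an immediate corollary of \cite{B 2004} and would not be presented here as new. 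A local maximum principle for a genuinely non-uniformly elliptic linearization cannot run on an $L^{1}$ bound of $\Delta u$; Bao's condition is a quantitatively much stronger integral hypothesis on $D^{2}u$, which you would have to verify rather than assume. Your ``self-contained'' fallback has the same problem: Moser iteration for $L$-subsolutions requires uniform ellipticity (or at least a controlled ellipticity ratio) of $\sigma_{k}^{ij}(D^{2}u)$, which a merely $k$-convex solution does not provide, as you yourself note in your closing paragraph. So the proposal identifies the obstruction correctly but does not surmount it.

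The paper takes a different and decisive route that you may want to compare against. Set $\Omega_{R}=\{x:u(Rx)<R^{2}\}$ and $v(x)=R^{-2}(u(Rx)-R^{2})$. The two-sided bound \eqref{eq:quad-growth2} traps $\Omega_{R}$ between two fixed balls and gives a uniform $L^{\infty}$ bound on $v$ there; this is exactly where the \emph{upper} quadratic growth earns its keep, because for a $k$-convex (non-convex) function the sub-level sets need not be bounded otherwise. One then applies the Chou--Wang interior gradient estimate \cite[Thm.\ 3.2]{Chou-Wang 2001} and the Pogorelov-type interior second-derivative estimate \cite[Thm.\ 1.5]{Chou-Wang 2001} to $v$ on these sub-level sets. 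These estimates are tailored to $k$-convex solutions of $\sigma_{k}=1$ with zero boundary data on a level set — they are maximum-principle arguments that do \emph{not} presuppose uniform ellipticity of the linearized operator, which is why they succeed where De Giorgi--Nash--Moser or a naive iteration stalls. Once $|D^{2}v|\le C$ on a fixed interior ball, the operator \emph{becomes} uniformly elliptic there, Evans--Krylov gives $\|D^{2}v\|_{C^{\alpha}}\le C$, and scaling back yields $\|D^{2}u\|_{C^{\alpha}(\overline{B}_{cR})}\le CR^{-\alpha}\to 0$, so $D^{2}u$ is constant. The missing ingredient in your proposal is precisely this: replace ``derive uniform ellipticity from an integral bound'' (which fails) with ``normalize by blow-down and use the Chou--Wang Pogorelov estimate on level sets'' (which does not need it). If you nonetheless wish to pursue the route through \cite{B 2004}, the honest version of that step is to look up Bao's exact integral hypothesis and check whether it can be derived from \eqref{eq:quad-growth2}; as written, the proposal leaves that check undone.
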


We remark that it is of interest to investigate the Liouville property for $k$-convex solutions due to the natural ellipticity class for Hessian equations.
Benefiting from the fact that level sets of convex functions are between two balls after some affine transformation (see \cite{de G 1975}), level set method is an effective way to study Liouville properties of Monge-Amp\`{e}re equations (see e.g., \cite{BLZ 2015}, \cite{CL 2003} and \cite{TW 2008}). However, the level sets of $k$-convex functions could even be unbounded. It is at this point where main changes have to be made to control the level sets of $k$-convex solutions. For this purpose, based on the lower quadratic growth condition in \cite{BCGJ 2003} and \cite{LRW 2016}, we further introduce the upper quadratic growth condition in Theorem \ref{thm:main}.

Denote
$$\mathcal{A}_k=\{A: A~\text{is~real}~n\times n~\text{symmetric~positive~definite~matrix~and}~\sigma_k(\lambda(A))=1\}.$$
The second main result of the paper is

\begin{theorem}\label{thm:main-thm}
Let $n\geq3$ and $f\in C^{2,\gamma}_{\mathrm{loc}}(\mathbb{R}^n)$ satisfy \eqref{C1}-\eqref{C2} for some $0<\gamma<1$. Then for any $A\in\mathcal{A}_k$, $b\in\mathbb{R}^n$ and $c\in\mathbb{R}$,
there exists a unique $k$-convex solution $u\in C^{4,\gamma}_{\mathrm{loc}}(\mathbb{R}^n)$ of \eqref{eq:k-Hessian=f} in $\mathbb{R}^n$ satisfying \eqref{eq:asymptotic}.
\end{theorem}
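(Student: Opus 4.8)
The plan is to construct the solution by the Perron method, using carefully designed sub- and super-solutions built from radial functions, and then to upgrade the resulting viscosity solution to a classical one and deduce its asymptotic behavior from the existing Monge-Ampère-type analysis adapted to the Hessian setting. First I would fix $A\in\mathcal{A}_k$ and, after a linear change of variables $x\mapsto T x$ with $T$ symmetric positive definite and $\sigma_k(\lambda(T^TAT))$ normalized, reduce to the model quadratic $Q(x)=\tfrac12 x^TAx+b\cdot x+c$; the linear invariance of $\sigma_k(\lambda(D^2u))$ under such changes (together with the transformation rule for the right-hand side) means it suffices to treat $A=c_kI$ for the appropriate constant. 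The radial $k$-Hessian operator applied to $u=u(r)$, $r=|x|$, is $\sigma_k(\lambda(D^2u))=\binom{n-1}{k-1}\frac{(u')^{k-1}}{r^{k-1}}\big(\frac{k}{n}u''+\frac{n-k}{n}\frac{u'}{r}\big)\binom{n}{k}/\binom{n-1}{k-1}$ — more precisely a first-order ODE in $u'$ — so I would solve the radial equation $\sigma_k(\lambda(D^2w))=1$ exactly outside a large ball, obtaining $w(r)=\tfrac{c_k}{2}r^2+\alpha + \beta r^{2-n}+(\text{lower order})$, and perturb it to account for $f-1$ using condition \eqref{C2} with $\beta>2$: the perturbation contributes a term of size $O(r^{\min\{\beta,n\}-2})$ (or $O(r^{n-2}\ln r)$ when $\beta=n$), which is exactly the asymptotic rate in \eqref{eq:asymptotic}.

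Next I would set up the Perron family $S=\{v\in C^0(\mathbb{R}^n): v \text{ is a }k\text{-convex subsolution of }\eqref{eq:k-Hessian=f},\ v\leq \overline{u}\text{ near }\infty\}$ where $\overline{u}$ is the radial supersolution just constructed matched to $Q$, take $u=\sup_{v\in S}v$, and verify in the standard way (using that the maximum of finitely many $k$-convex subsolutions is again a subsolution, and a local barrier/bump argument to see $u$ is a supersolution) that $u$ is a viscosity solution. The matching subsolution $\underline{u}$ is obtained by taking a slightly larger quadratic $\tfrac{c_k}{2}(1+\epsilon)^{?}\dots$ — more carefully, one perturbs $Q$ by subtracting a small multiple of $r^{2-n}$ so that the radial operator exceeds $f$; since $\underline{u}\le u\le \overline{u}$ and both bounds agree with $Q$ to the order \eqref{eq:asymptotic}, the asymptotic behavior of $u$ follows by squeezing, at least in the viscosity sense. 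Interior regularity: away from $D$ the equation is uniformly elliptic on the solution (because $D^2u$ is pinched between the second derivatives of $\underline u,\overline u$, hence $\lambda(D^2u)$ stays in a fixed compact subcone of $\Gamma_k$ at infinity), so Evans–Krylov plus Schauder gives $u\in C^{4,\gamma}_{\mathrm{loc}}$ outside $D$; inside, one invokes the solvability theory for the Dirichlet problem for Hessian equations (Caffarelli–Nirenberg–Spruck) on a large ball with boundary data from the exterior solution, together with a uniqueness/comparison argument to glue. The global $k$-convexity is preserved because it is a closed condition stable under the construction.

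The main obstacle I expect is twofold. The first is establishing the sharp asymptotic estimate \eqref{eq:asymptotic} for the Hessian operator rather than just $\underline u\le u\le\overline u$: for Monge-Ampère, Caffarelli–Li and Bao–Li–Zhang exploit that det$(D^2u)=f$ linearizes nicely and that level sets are trapped between ellipsoids, whereas here level sets of a $k$-convex function need not be bounded, so one cannot directly run the level-set iteration. I would instead linearize the Hessian equation around the radial model $w$: writing $u=w+\varphi$, the difference $\varphi$ satisfies a linear uniformly elliptic equation $a^{ij}(x)D_{ij}\varphi = f-1 + (\text{quadratic errors})$ with $a^{ij}$ the linearized coefficients $\frac{\partial \sigma_k}{\partial M_{ij}}(D^2w)$, which behave like the Laplacian at infinity (up to the constant-coefficient matrix $A$); then Bôcher-type/Green's function estimates for this operator on exterior domains — exactly as in \cite{BLZ 2015} but now for a variable-coefficient operator that is a perturbation of $\Delta$ — yield the decay rate $|x|^{\min\{\beta,n\}-2}$ and the logarithmic correction at $\beta=n$. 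The second obstacle is uniqueness: two solutions with the same asymptotic data have difference $\psi$ with $\psi=o(1)$ (or at worst $o(|x|^{2})$) at infinity solving a linear elliptic equation $b^{ij}D_{ij}\psi=0$ with $b^{ij}$ again asymptotic to the Laplacian; a Phragmén–Lindelöf / maximum-principle argument on exterior domains — using that $|x|^{2-n}$ is a supersolution of the linearized operator at infinity for $n\ge 3$ — forces $\psi\equiv 0$. Both of these reduce the nonlinear Hessian problem to linear potential theory for a uniformly elliptic operator that is an asymptotic perturbation of $\Delta$, and that is where the bulk of the technical work lies; the Perron construction and the interior regularity are comparatively routine given the CNS theory and the radial barriers.
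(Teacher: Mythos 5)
Your overall blueprint (construct barriers matching the target quadratic, squeeze a solution between them, derive the sharp asymptotic rate from the linearized equation, and get uniqueness by comparison) is broadly aligned with the paper's strategy, but two steps contain genuine errors.

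\textbf{Reduction to a radial model fails for $k<n$.} You propose a linear change of variables $x\mapsto Tx$ to normalize $A$ to a multiple of the identity and then build radial sub/super\-solutions. This is legitimate for Monge--Amp\`ere because $\det$ transforms by the determinant of $T$ under $D^2u\mapsto T^T D^2u\,T$, but $\sigma_k(\lambda(\cdot))$ for $2\le k\le n-1$ is invariant only under \emph{orthogonal} conjugation, not under general affine changes. So for a generic $A\in\mathcal{A}_k$ you cannot reduce to $A=c_kI$, and radial barriers $w(|x|)$ cannot be matched to the quadratic $\frac12 x^TAx$. The paper avoids this by working with ``generalized symmetric'' functions of $s=\frac12\sum a_ix_i^2$ (i.e.\ functions constant on the ellipsoids $\frac12 x^TAx=s$) and using the explicit formula of Bao--Li--Li for $\sigma_k$ acting on such functions; this is the correct replacement for radial ODE barriers in the Hessian setting and is exactly what goes into the construction of $\underline{u}$ and $\overline{u}$ in Lemma~\ref{lem:barrier}.

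\textbf{Second derivatives are not pinched by the barriers.} You claim that because $\underline{u}\le u\le\overline{u}$, the Hessian $D^2u$ is trapped between $D^2\underline{u}$ and $D^2\overline{u}$, and hence $\lambda(D^2u)$ stays in a compact subcone of $\Gamma_k$. This implication is false: pointwise ordering of functions gives no Hessian comparison. The paper instead obtains locally uniform $C^{1,1}$ bounds by applying the interior gradient estimate and the Pogorelov-type second derivative estimate of Chou--Wang on the sublevel sets $\{u_s<M\}$ of the approximating Dirichlet solutions, with the $L^\infty$ bound from Lemma~\ref{lem:barrier} feeding into these estimates. Only after that does Evans--Krylov/Schauder kick in. Without this a priori $C^{1,1}$ input, uniform ellipticity on compact sets is not established, and the Perron/limit argument does not close.

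As a matter of route rather than correctness: the paper solves the Dirichlet problem for $\sigma_k$ on the expanding ellipsoids $D_s$ with boundary data $s$ (existence from Caffarelli--Nirenberg--Spruck), shows $\underline{u}+\beta_-\le u_s\le\overline{u}+\beta_+$, passes to the limit with the Chou--Wang/Evans--Krylov/Schauder estimates, mollifies $f$ to handle $C^{2,\gamma}_{\mathrm{loc}}$ data, and then invokes the iterative asymptotic analysis of Proposition~\ref{thm:asymptotic} (linearization around $\frac12 x^TAx$ plus exterior potential theory with constant leading coefficients and the Gilbarg--Serrin removable singularity theorem). Your sketch of the asymptotic step — linearize and compare with the Laplacian at infinity — matches that spirit, but the iterative bootstrap from a weak rate $|x|^{2-\varepsilon}$ up to $|x|^{2-\min\{\beta,n\}}$ (with the log correction at $\beta=n$) is the substantial technical content you have glossed over.
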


\begin{remark}
For the special cases that $k=n=2$ and that $k=n\geq3$, the corresponding results have been proved in \cite{BXZ 2019} and \cite{BLZ 2015}, respectively.
\end{remark}

Now let us comment the order in \eqref{eq:asymptotic} that the solution of \eqref{eq:k-Hessian=f} approaches the quadratic polynomial near infinity. For the exterior Dirichlet problems for \eqref{eq:k-Hessian=f}, the prescribed order is approximate to that in \eqref{eq:asymptotic} with $n$ replaced by $\theta n$, where $\theta\in\left[\frac{k}{n},1\right]$ (see, e.g., \cite{BLL 2014} and \cite{CB 2017}). Therefore, we give a finer order that may be attained for solutions of \eqref{eq:k-Hessian=f} defined in exterior domains. A key step in proving Theorem \ref{thm:main-thm} is to analyze the asymptotic behavior of solutions near infinity. We deal with it in Proposition \ref{thm:asymptotic}.

The paper is organized as follows. In Section \ref{sec-2}, we prove Theorem \ref{thm:main}. In Section \ref{sec-3}, we investigate the asymptotic behavior of solutions of \eqref{eq:k-Hessian=f} near infinity, which plays a crucial role in the proof of Theorem \ref{thm:main-thm}. Section \ref{sec-4} is devoted to proving Theorem \ref{thm:main-thm}.

In the rest of the paper, we denote $B_r$ as the ball in $\mathbb{R}^n$ centered at $0$ of radius $r$, and $C(m_1,\cdots,m_j)$ as some positive constant depending only on $m_1,\cdots,m_j$ that may vary from line to line. For a real $n\times n$ symmetric matrix $\xi=(\xi_{ij})$, we denote by $\lambda(\xi)$ the eigenvalue vector of $\xi$. When $\lambda(\xi)\in\overline{\Gamma}_k$, we denote
$$F(\xi)=[\sigma_k(\lambda(\xi))]^{\frac{1}{k}}\quad\text{and}\quad F_{ij}(\xi)=D_{\xi_{ij}}F(\xi).$$

\ \\

\section{Proof of Theorem \ref{thm:main}}\label{sec-2}

In this section, we will prove Theorem \ref{thm:main} by making use of some known interior estimates for solutions of Hessian equations.

\begin{proof}[Proof of  Theorem \ref{thm:main}]
Without loss of generality, we may assume $R_0^2\geq2B$. For $R>R_0$, let
$$\Omega_R=\{x\in\mathbb{R}^n: u(Rx)< R^2\},$$
and
$$v(x)=\frac{u(Rx)-R^2}{R^2}.$$
Clearly, $v$ satisfies
\begin{equation*}
\begin{cases}
\sigma_k(\lambda(D^2v))=1&\quad\mathrm{in}~\Omega_R,\\
v=0&\quad\mathrm{on}~\partial\Omega_R.
\end{cases}
\end{equation*}
By \eqref{eq:quad-growth2}, we have
\begin{equation}\label{eq:O-r-R}
B_{\sqrt{\frac{1}{A_2}-\frac{B}{A_2R^2}}}\subset\Omega_R\subset B_{\sqrt{\frac{1}{A_1}}}\subset B_{1+\sqrt{\frac{1}{A_1}}}
\end{equation}
and
$$A_1|x|^2-1\leq v(x)\leq A_2|x|^2-1+\frac{B}{R^2}\leq A_2|x|^2-\frac{1}{2}.$$
It follows that
$$\|v\|_{L^\infty(B_{1+\sqrt{\frac{1}{A_1}}})}\leq C(A_1,A_2).$$
In view of \eqref{eq:O-r-R}, we apply the interior gradient estimate in \cite[Theorem 3.2]{Chou-Wang 2001} to $v$ in $B_{1+\sqrt{\frac{1}{A_1}}}$ and obtain
$$\|Dv\|_{L^\infty(\Omega_R)}\leq C.$$
Here and in the following, $C\geq1$ denotes some constant depending only on $n$, $k$, $A_1$, $A_2$, $B$ and $R_0$ unless otherwise stated.

For $M<0$, let
$$O_M=\{x\in\Omega_R: v(x)<M\}.$$
Taking $M=-\frac{1}{4}$, we apply the second derivative estimate in \cite[Theorem 1.5]{Chou-Wang 2001} to $v$ in $O_{M}$ and obtain
$$\Big(v+\frac{1}{4}\Big)^4|D^2v|\leq C\quad\mathrm{in}\ O_{-\frac{1}{4}}.$$
This yields that
$$|D^2v|\leq C\quad\mathrm{in}\ O_{-\frac{1}{3}}.$$
Note that $v(x)\leq A_2|x|^2-\frac{1}{2}$. We thus get
$$B_{\sqrt{\frac{1}{8A_2}}}\subset O_{-\frac{1}{3}}.$$
It follows from the Evans-Krylov theorem that
$$\|D^2v\|_{C^\alpha(\overline{B}_{\sqrt{\frac{1}{8A_2}}})}\leq C,\quad\forall\ 0<\alpha<1.$$
where $C$ depends in addition on $\alpha$. Hence
$$\|D^2u\|_{C^\alpha(\overline{B}_{\frac{R}{\sqrt{8A_2}}})}=R^{-\alpha}\|D^2v\|_{C^\alpha(\overline{B}_{\sqrt{\frac{1}{8A_2}}})}\leq CR^{-\alpha}.$$
Since $R>R_0$ is arbitrary, by letting $R\rightarrow\infty$, we obtain
$$\|D^2u\|_{C^\alpha(\mathbb{R}^n)}=0.$$
Therefore, $u$ is a quadratic polynomial. This finishes the proof of Theorem \ref{thm:main}.
\end{proof}

\section{Asymptotic behavior of solutions near infinity}\label{sec-3}

In this section we analyze the behavior of solutions near infinity. We will show that any solution that satisfies a certain growth condition approaches a quadratic polynomial at a certain speed near infinity. This key result will be used to construct entire solutions in Section \ref{sec-4}. Our proof employs iterative arguments.

Throughout this section, we always denote
$A=\text{diag}(a_1,a_2,\cdots,a_n)$ and $a=(a_1,a_2,\cdots,a_n)$.

\begin{prop}\label{thm:asymptotic}
Let $n\geq 3$ and $r_0$ be a positive number. Suppose that $f\in C^3(\mathbb{R}^n\setminus B_{r_0})$ satisfies $\inf_{\mathbb{R}^n\setminus B_{r_0}}f\geq\frac{1}{c_0}$ and
\begin{equation}\label{eq:f-deri}
|D^m(f(x)-1)|\leq c_0|x|^{-\beta-m},\quad\forall\ |x|>r_0,\ m=0,1,2,3
\end{equation}
for some $\beta>2$ and $c_0>0$. Let $u\in C^{4,\gamma}_{\mathrm{loc}}(\mathbb{R}^n\setminus B_{r_0})$ be a $k$-convex solution of
\begin{equation*}
\sigma_k(\lambda(D^2u))=f\quad\text{in}\ \mathbb{R}^n\setminus\overline{B}_{r_0},
\end{equation*}
satisfying
\begin{equation}\label{eq:u-growth}
\left|u(x)-\frac{1}{2}x^TAx\right|\leq c_1|x|^{2-\varepsilon},\quad\forall\ |x|>r_0
\end{equation}
for some $0<\gamma<1$, $A\in\mathcal{A}_k$ and $\varepsilon$, $c_1>0$. Then there exist $b\in\mathbb{R}^n$ and $c\in\mathbb{R}$ such that \eqref{eq:asymptotic} holds.
\end{prop}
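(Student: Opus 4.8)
The plan is to linearize the equation around the model quadratic $Q_A(x) = \frac12 x^T A x$ and run a Kelvin-type transform to reduce the problem to a linear elliptic equation in a punctured neighborhood of infinity, then harvest the asymptotic expansion from linear theory. First I would normalize: since $A \in \mathcal{A}_k$ is positive definite, an affine change of variables $x \mapsto A^{1/2} x$ (which preserves the class of $k$-Hessian equations up to a bounded factor because $\det A^{1/2}$ is fixed and the operator transforms in a controlled way) lets me assume $A = I$, so $u(x) = \frac12 |x|^2 + O(|x|^{2-\varepsilon})$ and $\sigma_k(\lambda(D^2 u)) = f$ with $f \to 1$. Write $w = u - \frac12|x|^2$, so $w = O(|x|^{2-\varepsilon})$ and $D^2 u = I + D^2 w$.

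The core step is a bootstrap on the decay of $w$ and its derivatives. The equation $\sigma_k(\lambda(I + D^2 w)) = f$ can be written, via the mean value form $F(D^2u) - F(D^2 Q) = \int_0^1 \frac{d}{dt} F(D^2 Q + t D^2 w)\, dt$ (with $F = \sigma_k^{1/k}$, which is smooth and uniformly elliptic near $\lambda = (1,\dots,1)$), as a linear uniformly elliptic equation
\[
a^{ij}(x)\, D_{ij} w = g(x), \qquad g = F(f) - F(\sigma_k(\lambda(I))) = f^{1/k} - 1 = O(|x|^{-\beta}),
\]
where $a^{ij}(x) = \int_0^1 F_{ij}(I + t D^2 w)\,dt$. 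The first task is to upgrade the crude bound $w = O(|x|^{2-\varepsilon})$ to $C^{2,\gamma}$ control with quantitative decay: rescaling $w$ on dyadic annuli $\{|x| \sim 2^j\}$, the rescaled functions solve equations with right-hand side decaying like $2^{-j\beta}$ and coefficients converging to constants, so interior Schauder estimates give $|D^2 w(x)| \le C|x|^{-\varepsilon}$, then iterating (each pass gaining because the coefficients $a^{ij} \to $ the constant-coefficient operator associated to $I$, and the source is $O(|x|^{-\beta})$ with $\beta > 2$) pushes the decay of $D^2 w$ down to $O(|x|^{-\min\{\beta,n\}+2-\delta})$ for every small $\delta$, hence $a^{ij}(x) = a^{ij}_\infty + O(|x|^{-\tau})$ for a genuine constant-coefficient elliptic operator $L_\infty = a^{ij}_\infty D_{ij}$ and some $\tau > 0$.

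Once the leading operator is a constant-coefficient elliptic operator with a controlled perturbation, I would apply the classical exterior asymptotic theory for such operators (after a linear change of variables, $L_\infty$ becomes the Laplacian $\Delta$, which is the step where $n \ge 3$ matters, since $|x|^{2-n}$ is the relevant harmonic kernel). The standard argument — comparison with the fundamental solution $|x|^{2-n}$ as a barrier against the source term $g = O(|x|^{-\beta})$, combined with a Bôcher-type removable-singularity / expansion-at-infinity statement — yields that $w(x) = b\cdot x + c + O(|x|^{\min\{\beta,n\}-2})$ when $\beta \ne n$, with the logarithmic correction $O(|x|^{2-n}\ln|x|)$ in the resonant case $\beta = n$ (this is exactly where the $\ln|x|$ in \eqref{eq:asymptotic} comes from: the source $|x|^{-n}$ matched against the kernel $|x|^{2-n}$ produces a logarithm upon integration). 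Undoing the normalization $A = I$ converts the linear term $b\cdot x$ and constant $c$ back into the claimed $b$ and $c$ for the original $A$, giving \eqref{eq:asymptotic}.

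The main obstacle I anticipate is making the first bootstrap genuinely rigorous: from the single a priori bound $|w| \le c_1|x|^{2-\varepsilon}$, one must extract $C^{2,\gamma}$-decay of $w$ uniformly on all annuli at infinity, and this requires that the rescaled equations stay uniformly elliptic — i.e., that $\lambda(D^2 u)$ stays in a fixed compact subcone of $\Gamma_k$ away from the boundary, uniformly as $|x| \to \infty$. A priori $k$-convexity only gives $\lambda \in \overline{\Gamma}_k$, so eigenvalues could a priori degenerate; one needs the growth condition \eqref{eq:u-growth} together with interior second-derivative estimates for Hessian equations (in the spirit of those cited for Theorem \ref{thm:main}) to trap $D^2 u$ near $I$ at large scales and thereby recover uniform ellipticity. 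After that, the iteration and the linear-theory endgame are essentially the same as in the Monge–Ampère case treated in \cite{CL 2003} and \cite{BLZ 2015}, with $F = \sigma_k^{1/k}$ playing the role that $(\det)^{1/n}$ plays there; the role of $n \ge 3$ and the appearance of the $\ln|x|$ term at $\beta = n$ are inherited unchanged from that setting.
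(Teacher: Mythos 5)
Your proposal captures the overall strategy of the paper --- set $w = u - \tfrac12 x^T A x$, establish uniform ellipticity at large scales from the growth hypothesis via the Chou--Wang interior estimates, iterate a Schauder/potential-theory bootstrap on dyadic annuli to improve the decay of $D^2 w$, then invoke classical exterior linear theory (Newtonian potential comparison plus Gilbarg--Serrin) to extract $b$, $c$, and the $|x|^{2-\min\{\beta,n\}}$ (resp.\ $|x|^{2-n}\ln|x|$) remainder. You also correctly identify the one genuinely nontrivial input --- that $k$-convexity alone does not guarantee the rescaled equations stay uniformly elliptic and that \eqref{eq:u-growth} together with interior second-derivative estimates must close this gap --- which is exactly what Lemma~\ref{lem:deri-est} in the paper does. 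On these points the routes coincide.

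However, there is a genuine error at the very first step. You claim that the affine change $x \mapsto A^{1/2}x$ ``preserves the class of $k$-Hessian equations up to a bounded factor because $\det A^{1/2}$ is fixed and the operator transforms in a controlled way,'' and you use this to normalize $A = I$. This fails for every $k$ with $1 \le k < n$. Writing $u(x) = v(A^{1/2}x)$ gives $D^2 u = A^{1/2}(D^2 v)A^{1/2}$, and while $\det$ is multiplicative (so $k=n$ works, which is why Caffarelli--Li and Bao--Li--Zhang can normalize to $I$ in the Monge--Amp\`ere setting), the lower elementary symmetric functions of $\lambda(A^{1/2}\xi A^{1/2})$ are not a fixed multiple of $\sigma_k(\lambda(\xi))$ --- already for $k=1$, $\operatorname{tr}(A\, D^2 v) \ne \operatorname{tr}(D^2 v)$ unless $A = I$. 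The only transformations under which $\sigma_k$ is invariant are orthogonal, and these can at best diagonalize $A$, not make it the identity. Consequently $v$ does not solve $\sigma_k(\lambda(D^2 v)) = f$, so the whole linearization $D^2 u = I + D^2 w$ and the reduction to a constant-coefficient operator ``associated to $I$'' do not start from valid premises. The paper copes with this by keeping $A = \operatorname{diag}(a_1,\dots,a_n)$ throughout, linearizing around $A$ rather than $I$, working with the $A$-dependent constant-coefficient operator $\bar a_i D_{ii}$ where $\bar a_i = F_{ii}(A) = \tfrac1k \sigma_{k-1;i}(a)$, and only performing an affine change of variables \emph{after} the equation has been reduced to a genuinely linear one with constant coefficients (Lemma~\ref{lem:potential}), where such changes are harmless. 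To repair your argument you should drop the $A \to I$ normalization and carry $A$ explicitly through the bootstrap; once that is done, the rest of your outline goes through as in the paper.
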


Before the proof we first derive the power decay of derivatives.

\begin{lemma}\label{lem:deri-est}
Under the assumptions of Proposition \ref{thm:asymptotic}, let
\begin{equation*}\label{eq:w-bdd}
w(x)=u(x)-\frac{1}{2}x^TAx~\quad\text{for}\ |x|>r_0.
\end{equation*}
Then there exist constants $C=C(n,k,A,r_0,\varepsilon,\beta,m,c_0,c_1)>0$ and $r_1=r_1(A,\varepsilon,c_1)>r_0$ such that for $m=0,1,2,3,4$,
\begin{equation}\label{eq:w-deri-est}
|D^mw(x)|\leq C|x|^{2-\min\{\varepsilon,\beta\}-m}\quad\text{for}~|x|>r_1.
\end{equation}
\end{lemma}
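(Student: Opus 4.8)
The plan is to bootstrap the growth bound \eqref{eq:u-growth} into pointwise derivative bounds by rescaling the equation on dyadic annuli and applying interior Schauder-type estimates for the Hessian operator. Fix a point $x_0$ with $|x_0| = 2R$ large, and set $r = R/2$, so the ball $B_r(x_0)$ sits inside the annulus $\{R \le |x| \le 3R\}$ and is bounded away from $B_{r_0}$ once $R$ is large. Introduce the rescaled function
$$
v(y) = \frac{1}{R^2}\Big(u(x_0 + R y) - \tfrac{1}{2}(x_0 + Ry)^T A (x_0 + Ry)\Big) = \frac{1}{R^2}\, w(x_0 + Ry),\qquad |y| \le \tfrac12.
$$
By \eqref{eq:u-growth}, $\|v\|_{L^\infty(B_{1/2})} \le C R^{-\min\{\varepsilon,\beta\}}$ — more precisely, since $w(x) = u(x) - \tfrac12 x^T A x$ and $|x_0 + Ry| \sim R$ throughout, the growth hypothesis gives $|w(x_0+Ry)| \le c_1 (CR)^{2-\varepsilon}$, hence $\|v\|_{L^\infty} \le C R^{-\varepsilon}$; I will carry $\min\{\varepsilon,\beta\}$ since the $f$-perturbation will contribute a $\beta$-rate as well. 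The function $\tilde u(y) := R^{-2} u(x_0 + Ry)$ solves $\sigma_k(\lambda(D^2 \tilde u)) = f(x_0 + Ry) =: \tilde f(y)$, and $v = \tilde u - \tfrac12 y^T A y - (\text{linear}) - (\text{const})$, so $D^2 v = D^2 \tilde u - A$.

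The key step is to linearize. Since $\sigma_k(\lambda(A)) = 1 = \lim_{|x|\to\infty} f$, and $D^2\tilde u = A + D^2 v$ with $D^2 v$ small in $L^\infty$ (from the $C^0$ bound plus, at this stage, a crude a priori bound or by first establishing the $m\le 2$ cases via Chou–Wang interior estimates as in the proof of Theorem \ref{thm:main}), the uniform ellipticity of $F = \sigma_k^{1/k}$ near the matrix $A \in \mathcal A_k$ means $v$ satisfies a linear uniformly elliptic equation $a^{ij}(y) D_{ij} v = g(y)$ in $B_{1/4}$, where $a^{ij}$ is comparable to $F_{ij}(A)$ and $g$ collects the error $\tilde f(y) - 1 = f(x_0 + Ry) - 1$ together with the quadratic remainder from Taylor-expanding $\sigma_k$ around $A$. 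By \eqref{eq:f-deri}, $\|\tilde f - 1\|_{C^{0}(B_{1/4})} \le c_0 (CR)^{-\beta}$ and, crucially, $R^m \| D^m \tilde f\|_{C^0} \le c_0(CR)^{-\beta}$ for $m \le 3$, so in rescaled variables the right-hand side is of size $R^{-\min\{\varepsilon,\beta\}}$ with matching control on its derivatives. Interior $W^{2,p}$ / Schauder estimates for the linearized equation then give $\|v\|_{C^{2,\gamma}(B_{1/8})} \le C(\|v\|_{L^\infty(B_{1/4})} + \|g\|_{C^{0,\gamma}(B_{1/4})}) \le C R^{-\min\{\varepsilon,\beta\}}$, and differentiating the equation (using $f \in C^3$) yields $\|v\|_{C^{4,\gamma}(B_{1/16})} \le C R^{-\min\{\varepsilon,\beta\}}$. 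Unravelling the scaling, $D^m w(x_0) = R^{m-2} D^m v(0) \cdot$ (bounded factors), so $|D^m w(x_0)| \le C R^{2 - \min\{\varepsilon,\beta\} - m}$, which is \eqref{eq:w-deri-est} with $|x_0| = 2R$; renaming gives the stated bound for $|x| > r_1$.

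The main obstacle is making the linearization rigorous and uniform in $R$: one must ensure that $D^2\tilde u$ stays in a fixed compact subset of the cone $\Gamma_k$ on which $F_{ij}$ is uniformly elliptic, with ellipticity constants independent of $R$. This is where the hypotheses genuinely bite — the lower bound $\inf f \ge 1/c_0$ keeps us in $\Gamma_k$, and to control $D^2 v = D^2\tilde u - A$ from above one needs the a priori interior second-derivative estimate (Pogorelov/Chou–Wang type, as invoked in Section \ref{sec-2}) applied on the annulus, giving $\|D^2 v\|_{L^\infty} \le C$ first, then improving to smallness via the $C^0$ bound and an interpolation or iteration over finitely many dyadic scales. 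A secondary point is bookkeeping: I would run the argument iteratively — first get $m=0,1,2$ with rate $2 - \min\{\varepsilon,\beta\}$, feed that back to treat $v$'s equation as genuinely linear with the now-controlled quadratic error absorbed into $g$, and then push to $m=3,4$ using $f\in C^3$. The constant $r_1$ depending only on $A,\varepsilon,c_1$ arises from requiring $R$ large enough that the rescaled Hessian lies in the chosen compact subcone; once $R \ge r_1$ the ellipticity constants and hence $C$ are locked in.
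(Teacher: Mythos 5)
Your proposal follows essentially the same strategy as the paper: rescale $w$ around a point at distance $\sim R$ from the origin, invoke the Chou--Wang interior gradient and second-derivative estimates on a sublevel set of the rescaled solution to get a uniform $C^2$ bound, upgrade to $C^{4,\alpha}$ by uniform ellipticity and concavity, then apply Schauder to the linearized equation and unscale. The paper works on the ellipsoidal sets $D_s=\{\tfrac12 y^TAy<s\}$ rather than balls, but this is cosmetic.

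The one place where your sketch has a real soft spot is the linearization step. You propose to freeze coefficients at $A$ and dump the ``quadratic remainder from Taylor-expanding $\sigma_k$ around $A$'' into the right-hand side $g$, then claim $\|g\|_{C^{0,\gamma}}\le CR^{-\min\{\varepsilon,\beta\}}$. But the remainder is $O(|D^2v|^2)$, and a priori (after Chou--Wang) you only know $|D^2v|\le C$, so $g=O(1)$; the Schauder step then returns no decay, and the ``iteration over dyadic scales'' you gesture at does not break this circularity. The paper sidesteps the issue entirely by the fundamental-theorem-of-calculus trick: it writes $a^R_{ij}(y)=\int_0^1 F_{ij}(A+sD^2w_R(y))\,\mathrm{d}s$ so that $a^R_{ij}D_{ij}w_R = F(A+D^2w_R)-F(A) = \bar f_R^{1/k}-1$ exactly, with no Taylor remainder, and uniform ellipticity of $a^R_{ij}$ only needs $A+D^2w_R$ to stay in a compact subset of $\{\lambda\in\overline{\Gamma}_k:\sigma_k(\lambda)\ge 1/c_0\}$ bounded above, which the Chou--Wang estimate provides. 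With this one modification your argument closes; without it, the claimed decay of $g$ is unjustified.
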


\begin{proof}
For $s>0$, let
$$D_s=\left\{x\in\mathbb{R}^n:\frac{1}{2}x^TAx<s\right\}.$$
For $x$ with $|x|>2r_0$ and $R=(\min_{1\leq i\leq n}a_i)^{\frac{1}{2}}|x|$, let
$$u_R(y)=\left(\frac{4}{R}\right)^2u\left(x+\frac{R}{4}y\right)\quad\text{for}~y\in D_2,$$
and
$$w_R(y)=\left(\frac{4}{R}\right)^2w\left(x+\frac{R}{4}y\right)\quad\text{for}~y\in D_2.$$
Then
\begin{equation*}
\begin{split}
w_R(y)&=u_R(y)-\frac{8}{R^2}\Big(x+\frac{R}{4}y\Big)^TA\Big(x+\frac{R}{4}y\Big)\\
&=u_R(y)-\left(\frac{1}{2}y^TAy+\frac{4}{R}x^TAy+\frac{8}{R^2}x^TAx\right)\quad\text{for}~y\in D_2.
\end{split}
\end{equation*}
Let
$$\bar{u}_R(y)=u_R(y)-\frac{4}{R}x^TAy-\frac{8}{R^2}x^TAx\quad\text{for}~y\in D_2.$$
It is clear that
$$\sigma_k(\lambda(D^2\bar{u}_R))=\bar{f}_R(y):=f\Big(x+\frac{R}{4}y\Big)\quad\text{in}\ D_2.$$
By \eqref{eq:f-deri}, we have for $m=0,1,2,3$,
$$\|\bar{f}_R-1\|_{C^{m}(\overline{D}_2)}\leq C(A,\beta,m,c_0)R^{-\beta}.$$
By \eqref{eq:u-growth}, we have for $y\in D_2$,
\begin{equation}\label{eq:v-R-bdd}
\Big|\bar{u}_R(y)-\frac{1}{2}y^TAy\Big|=|w_R(y)|\leq\frac{16c_1}{R^2}\Big|x+\frac{R}{4}y\Big|^{2-\varepsilon}\leq C(A,\varepsilon
,c_1)R^{-\varepsilon},
\end{equation}
which particularly yields
$$\|\bar{u}_R\|_{L^\infty(D_2)}\leq C(A,r_0,\varepsilon,c_1).$$

For $M\leq2$, let
$$\Omega_{M,R}=\{y\in D_2: \bar{u}_R(y)<M\}.$$
In view of \eqref{eq:v-R-bdd}, we have
$$\Omega_{1.5,R}\subset D_{1.6}$$
for $R>r_1$ with $r_1=r_1(A,\varepsilon,c_1)>r_0$ sufficiently large. Applying the interior gradient estimate in \cite[Theorem 3.2]{Chou-Wang 2001} to $\bar{u}_R$ in $D_2$, we obtain
$$\|\nabla\bar{u}_R\|_{L^{\infty}(D_{1.6})}\leq C.$$
Here and in the following, $C\geq1$ denotes some constant depending only on $n$, $k$, $A$, $r_0$, $\varepsilon$, $\beta$, $m$, $c_0$ and $c_1$ unless otherwise stated. Applying the interior second derivatives estimate in \cite[Theorem 1.5]{Chou-Wang 2001} to $\bar{u}_R$ in $\Omega_{1.5,R}$, we further obtain
$$(\bar{u}_R(y)-1.5)^4|D^2\bar{u}_R(y)|\leq C\quad\text{for}\ y\in\Omega_{1.5,R},$$
and so
$$||D^2\bar{u}_R||_{L^\infty(\Omega_{1.2,R})}\leq C.$$
It follows that
$$D^2u_R=D^2\bar{u}_R\leq CI\quad\text{in}~D_{1.1}.$$
This implies that the operator $F$ is uniformly elliptic in $D_{1.1}$. Combining its concavity, the interior derivative estimate (see, e.g., \cite[chapter 17.4]{Gilbarg-Trudinger}) yields that
$$||u_R||_{C^{4,\alpha}(\overline{D}_1)}\leq C,\quad\forall\ 0<\alpha<1,$$
where $C$ depends in addition on $\alpha$.

It is easy to see
\begin{equation}\label{eq:w_R}
||w_R||_{C^{4,\alpha}(\overline{D}_1)}\leq C\quad\text{and}\quad A+D^2w_R\leq CI\quad\text{in}\ D_1.
\end{equation}
Clearly, $w_R$ satisfies
$$a^R_{ij}(y)D_{ij}w_R(y)=F(A+D^2w_R(y))-F(A)=\bar{f}_R^{\frac{1}{k}}(y)-1\quad\text{for}\ y\in\ D_1,$$
where $$a^R_{ij}(y)=\int_0^1F_{{ij}}(A+sD^2w_R(y))\,\text{d}s.$$
By \eqref{eq:w_R}, $(a^R_{ij})$ is uniformly elliptic in $D_1$ and
$$\|a^R_{ij}\|_{C^{2,\alpha}(\overline{D}_1)}\leq C.$$
By the Schauder estimates, we have for $m=0,1,2,3,4$,
$$|D^mw_R(0)|\leq C(\|w_R\|_{L^\infty(D_1)}+\|\bar{f}_R^{\frac{1}{k}}-1\|_{C^{2,\alpha}(\overline{D}_1)})\leq CR^{-\min\{\varepsilon,\beta\}}.$$
It follows that
$$|D^mw(x)|=\left(\frac{R}{4}\right)^{2-m}|D^mw_R(0)|\leq C|x|^{2-\min\{\varepsilon,\beta\}-m}\quad\text{for}~|x|>r_1.$$
This finishes the proof.
\end{proof}

Next we give the power decay of solutions of linear elliptic equations on exterior domains. The following lemma will be used in the proof of this section several times.

\begin{lemma}\label{lem:potential}
Let $n\geq3$ and $\Omega\subset\mathbb{R}^n$ be a bounded domain containing the origin. Let $h\in C^2(\mathbb{R}^n\setminus\Omega)$ be a solution of
$$a^{ij}D_{ij}h(x)=g(x)\quad\text{for}\ x\in\mathbb{R}^n\setminus\overline{\Omega},$$
where $(a^{ij})$ is a real $n\times n$ symmetric positive definite constant matrix, and $g\in C^{0,\gamma}_{\mathrm{loc}}(\mathbb{R}^n\setminus\Omega)$ satisfies
\begin{equation}\label{eq:g_delta}
|g(x)|\leq c_2|x|^{-\delta}\quad\text{for}\ x\in\mathbb{R}^n\setminus\Omega,
\end{equation}
for some $0<\gamma<1$, $c_2>0$ and $\delta>2$. Suppose there exists a constant $h_\infty$ such that
$$h(x)\rightarrow h_\infty\quad\text{as}\ |x|\rightarrow\infty.$$
Then
\begin{equation*}
h(x)-h_\infty=
\begin{cases}
O(|x|^{2-\min\{\delta,n\}}),&\text{if}\ \delta\neq n,\\
O(|x|^{2-n}\ln |x|),&\text{if}\ \delta=n,
\end{cases}
\end{equation*}
as $|x|\rightarrow\infty$.
\end{lemma}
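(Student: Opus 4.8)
The plan is to reduce to the case of constant-coefficient Laplace-type operators by a linear change of variables, and then use the Newtonian potential together with the maximum principle. First I would diagonalize: since $(a^{ij})$ is symmetric positive definite, there is an invertible linear map $T$ with $T^\top(a^{ij})T=I$; setting $\tilde h(y)=h(Ty)$ and $\tilde g(y)=g(Ty)$, the equation becomes $\Delta\tilde h=\tilde g$ on $\mathbb{R}^n\setminus T^{-1}\overline\Omega$, the decay \eqref{eq:g_delta} is preserved (with a new constant, since $|Ty|$ and $|y|$ are comparable), and $\tilde h\to h_\infty$ at infinity. So without loss of generality $a^{ij}=\delta^{ij}$ and $\Omega$ may be taken to be a ball $B_{R_*}$ for some $R_*$ large enough to contain the original domain.

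Next I would construct a particular solution via the Newtonian potential. Extend $g$ to all of $\mathbb{R}^n$ by cutting it off smoothly inside $B_{R_*}$ so that the extension $\bar g$ still satisfies $|\bar g(x)|\le C(1+|x|)^{-\delta}$ globally, and define
\[
w(x)=\int_{\mathbb{R}^n}\Phi(x-y)\,\bar g(y)\,dy,
\]
where $\Phi(x)=c_n|x|^{2-n}$ is the fundamental solution of $\Delta$. Because $\delta>2$, a standard splitting of the integral into the regions $\{|y|\le |x|/2\}$, $\{|x-y|\le |x|/2\}$ (using $\gamma$-Hölder continuity of $\bar g$ for local integrability of the singularity, so $w\in C^{2,\gamma}_{\mathrm{loc}}$), and $\{|y|\ge |x|/2,\ |x-y|\ge|x|/2\}$ yields exactly the claimed bound: $|w(x)|\le C|x|^{2-\min\{\delta,n\}}$ if $\delta\ne n$ and $|w(x)|\le C|x|^{2-n}\ln|x|$ if $\delta=n$, with $w(x)\to 0$ as $|x|\to\infty$ (the borderline $\delta=n$ logarithm arises precisely from the region $|y|\sim |x|$). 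This is the computational heart of the lemma, though it is entirely routine.

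Finally I would compare $h$ with $w$. The difference $\psi:=h-w-h_\infty$ is harmonic on $\mathbb{R}^n\setminus\overline{B_{R_*}}$, tends to $0$ at infinity, and is continuous (hence bounded) on the compact sphere $\partial B_{R_*}$. A harmonic function on an exterior domain of $\mathbb{R}^n$ ($n\ge 3$) that vanishes at infinity satisfies $|\psi(x)|\le C|x|^{2-n}$: compare $\psi$ on the annulus $R_*<|x|<\rho$ with the barrier $x\mapsto M(|x|^{2-n}-\rho^{2-n})(R_*^{2-n}-\rho^{2-n})^{-1}$ for $M=\max_{\partial B_{R_*}}|\psi|$, apply the maximum principle, and let $\rho\to\infty$. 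Since $2-n\le 2-\min\{\delta,n\}$, this contribution is absorbed into the bound for $w$, and we conclude $h(x)-h_\infty=w(x)+\psi(x)$ obeys the stated asymptotics. The main obstacle, such as it is, is bookkeeping in the three-region estimate for the potential $w$ and making sure the cutoff extension does not destroy either the decay of $g$ or the solvability, but no genuinely new idea is needed beyond classical potential theory.
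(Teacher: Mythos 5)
Your approach matches the paper's in all essentials: reduce to the Laplacian by a linear change of variables, solve away the source with a Newtonian potential, estimate its decay by splitting the integral into regions (near the origin, near $x$, and far), and control the leftover harmonic function by a maximum-principle comparison with $|x|^{2-n}$. The only cosmetic difference is that the paper integrates the Newtonian kernel over $\mathbb{R}^n\setminus E$ without extending $g$ and uses a four-region split, while you extend $g$ to all of $\mathbb{R}^n$ and use three regions. One small slip to fix: if $\tilde h(y)=h(Ty)$ then $D^2\tilde h=T^\top (D^2h)\,T$, so $\Delta\tilde h=\tilde g$ requires $TT^\top=(a^{ij})$ (e.g.\ $T=(a^{ij})^{1/2}$), not $T^\top(a^{ij})T=I$; the latter would turn $a^{ij}D_{ij}h$ into the operator with coefficient matrix $(a^{ij})^{-1}$ rather than the Laplacian.
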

\begin{proof}
By an orthogonal transformation, we may assume $(a^{ij})$ is diagonal.
Let
$$y=Tx:=\left(\frac{x_1}{\sqrt{a^{11}}},\frac{x_2}{\sqrt{a^{22}}},\cdots,\frac{x_n}{\sqrt{a^{nn}}}\right)$$
and $E=T(\Omega)$. Then
$$\widetilde{h}(y):=h(\sqrt{a^{11}}y_1,\sqrt{a^{22}}y_2,\cdots,\sqrt{a^{nn}}y_n)\quad\text{for}\ y\in\mathbb{R}^n\setminus E$$
and
$$\widetilde{g}(y):=g(\sqrt{a^{11}}y_1,\sqrt{a^{22}}y_2,\cdots,\sqrt{a^{nn}}y_n)\quad\text{for}\ y\in\mathbb{R}^n\setminus E$$
satisfy
$$\Delta\widetilde{h}(y)=a^{ii}D_{ii}h(x)=g(x)=\widetilde{g}(y)\quad\text{for}\ y\in\mathbb{R}^n\setminus\overline{E}.$$
We set
$$\hat{h}(y)=\int_{\mathbb{R}^n\setminus E}\frac{1}{n(2-n)\omega_n}|z-y|^{2-n}\widetilde{g}(z)\text{d}z\quad\text{for}\ y\in\mathbb{R}^n\setminus E,$$
where $\omega_n$ is the volume of $n$-dimensional unit ball.
Then
$$\Delta\hat{h}(y)=\widetilde{g}(y)\quad\text{for}\ y\in\mathbb{R}^n\setminus\overline{E}.$$
It follows from \eqref{eq:g_delta} that
\begin{equation}\label{eq:h_delta}
|\hat{h}(y)|\leq
\begin{cases}
C|y|^{2-\delta},&\text{if}\ \delta\neq n,\\
C|y|^{2-n}\ln|y|,&\text{if}\ \delta=n.
\end{cases}
\end{equation}
Indeed, for each $y\in\mathbb{R}^n\setminus E$, let
\begin{equation*}
\begin{split}
E_1=&\left\{z\in\mathbb{R}^n\setminus E:|z|\leq\frac{|y|}{2}\right\},\\
E_2=&\left\{z\in\mathbb{R}^n\setminus E:|z-y|\leq\frac{|y|}{2}\right\},\\
E_3=&\left\{z\in\mathbb{R}^n\setminus E:|z-y|\geq|z|\right\},\\
E_4=&(\mathbb{R}^n\setminus E)\setminus(E_1\cup E_2\cup E_3).
\end{split}
\end{equation*}
Then \eqref{eq:h_delta} follows from the elementary estimates.

By the above, we conclude that
\begin{equation*}
\begin{cases}
\Delta(\widetilde{h}-\hat{h})=0&\text{in}\ \mathbb{R}^n\setminus\overline{E},\\
|\widetilde{h}(y)-h_\infty-\hat{h}(y)|\rightarrow0&\text{as}\ |y|\rightarrow\infty.
\end{cases}
\end{equation*}
Then the maximum principle implies
$$|\widetilde{h}(y)-h_\infty-\hat{h}(y)|=O(|y|^{2-n})\quad\text{as}\ |x|\rightarrow\infty.$$
The conclusion follows from \eqref{eq:h_delta}.
\end{proof}

To continue we prove a lemma that improves the estimates in Lemma \ref{lem:deri-est}. It is a key step towards Proposition \ref{thm:asymptotic}.
In the rest of this section, we write $g=f^\frac{1}{k}$.

\begin{lemma}\label{lem:deri-est-2}
Under the assumptions of Lemma \ref{lem:deri-est}, if $2\varepsilon<1$, then there exist constants $C=C(n,k,A,r_0,\varepsilon,\beta,m,c_0,c_1)>0$ and $r_2=r_2(n,k,A,r_0,\varepsilon,\beta,m,c_0,c_1)>r_1$ such that for $m=0,1,2,3,4$,
$$|D^mw(x)|\leq C|x|^{2-2\varepsilon-m}\quad\text{for}\ |x|>r_2,$$
where $r_1$ is as in Lemma \ref{lem:deri-est}.
\end{lemma}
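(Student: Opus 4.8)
The plan is to bootstrap the decay rate of $w$ by differentiating the linearized equation and applying Lemma~\ref{lem:potential}. Recall from Lemma~\ref{lem:deri-est} that $w$ satisfies $a^R_{ij}(y)D_{ij}w_R(y)=\bar f_R^{1/k}(y)-1$ on each unit ball $D_1$ centered at a far-away point, with uniformly elliptic $a^R_{ij}$ bounded in $C^{2,\alpha}$, and $|D^m w(x)|\le C|x|^{2-\min\{\varepsilon,\beta\}-m}$ for $|x|>r_1$. Writing $g=f^{1/k}$, the global linearized equation for $w$ on $\mathbb{R}^n\setminus B_{r_1}$ reads
$$
a^{ij}(x)D_{ij}w(x)=g(x)-1,\qquad a^{ij}(x)=\int_0^1 F_{ij}(A+sD^2w(x))\,\mathrm{d}s.
$$
Since $F_{ij}(A)$ is a constant symmetric positive definite matrix and $D^2w(x)\to0$, we have $a^{ij}(x)=F_{ij}(A)+O(|x|^{-\min\{\varepsilon,\beta\}})$ by the just-quoted estimate together with smoothness of $F$. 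The idea is to freeze the coefficients at infinity: set $\bar a^{ij}=F_{ij}(A)$ and rewrite
$$
\bar a^{ij}D_{ij}w(x)=\big(g(x)-1\big)+\big(\bar a^{ij}-a^{ij}(x)\big)D_{ij}w(x)=:G(x).
$$

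The second step is to estimate $G$. From \eqref{eq:f-deri} we get $|g(x)-1|=|f^{1/k}(x)-1|\le C|x|^{-\beta}$ with $\beta>2$; and $|\bar a^{ij}-a^{ij}(x)|\,|D^2w(x)|\le C|x|^{-\min\{\varepsilon,\beta\}}\cdot C|x|^{-\min\{\varepsilon,\beta\}}=C|x|^{-2\min\{\varepsilon,\beta\}}$. Since we are in the regime $\min\{\varepsilon,\beta\}=\varepsilon$ (otherwise the conclusion of Lemma~\ref{lem:deri-est} already gives $|x|^{2-\beta-m}$ which is stronger than $|x|^{2-2\varepsilon-m}$ when $2\varepsilon<\beta$, and one reduces to that case), the worst term is $C|x|^{-2\varepsilon}$, but $2\varepsilon<1<2$, so $G$ need not decay faster than critically. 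To apply Lemma~\ref{lem:potential} we need the source to decay strictly faster than $|x|^{-2}$; since $2\varepsilon$ could be smaller than $2$, a single application is not enough and one must proceed in finitely many stages. Each stage: feed the current bound $|D^mw|\le C|x|^{\mu-m}$ (starting from $\mu=2-\varepsilon$) into $G$, obtaining $|G(x)|\le C|x|^{-2(2-\mu)}$ roughly (combining the $g-1$ term of order $|x|^{-\beta}$ and the quadratic term of order $|x|^{2\mu-4}$), then Lemma~\ref{lem:potential} applied to $h=w-w_\infty$ upgrades $w$ to decay like $|x|^{2-\min\{2(2-\mu),n,\beta\}}$; the exponent $\mu$ improves by roughly a factor related to $\varepsilon$ each time, and after finitely many steps one reaches $\mu\le 2\varepsilon$, i.e. $|D^mw(x)|\le C|x|^{2-2\varepsilon-m}$. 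The interior Schauder/gradient machinery already set up in Lemma~\ref{lem:deri-est} transfers each improved sup-bound on $w$ into the corresponding bounds on $D^mw$, $m=1,2,3,4$, on a slightly smaller exterior domain, at the cost of enlarging the radius $r_1\rightsquigarrow r_2$.

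The main obstacle is twofold. First, one must verify that $w$ actually has a limit $w_\infty$ at infinity before Lemma~\ref{lem:potential} applies: this follows because \eqref{eq:u-growth} and the gradient estimate $|Dw(x)|\le C|x|^{1-\varepsilon}$ do not immediately give convergence, so one integrates $Dw$ along rays using the sharper $|Dw(x)|\le C|x|^{1-\min\{\varepsilon,\beta\}}$ — still not summable — hence the bootstrap and the limit must be established together: after the first improvement the gradient decays like $|x|^{1-2\varepsilon+\text{something}}$, and once the decay exponent of $Dw$ drops below $-1$, the integral $\int^\infty |Dw|\,\mathrm{d}r$ converges, giving $w_\infty$, and thereafter Lemma~\ref{lem:potential} can be invoked repeatedly. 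Second, bookkeeping the exponents so the iteration terminates exactly at $2\varepsilon$ (and not overshooting past $\min\{\beta,n\}$, which would be the content of Proposition~\ref{thm:asymptotic} itself rather than this lemma) requires care: one should phrase the induction as "if $|D^mw|\le C|x|^{2-\tau-m}$ for some $\tau\in(\varepsilon,2\varepsilon]$ then also $|D^mw|\le C|x|^{2-\min\{2\tau,\,?\}-m}$" and stop the first time $\min\{2\tau,\dots\}\ge 2\varepsilon$. I expect the exponent arithmetic, not any deep estimate, to be where the proof is delicate; everything analytic is supplied by Lemma~\ref{lem:potential} and the interior estimates already in hand.
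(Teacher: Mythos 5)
Your approach contains a genuine gap at the very first step, and the paper's proof resolves it with an idea you did not identify: \emph{differentiating the equation twice} to pass from $w$ to its second derivatives.

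You propose to work directly with the linearized equation for $w$, namely $\bar a^{ij}D_{ij}w = G$ with $G = (g-1) + (\bar a^{ij}-a^{ij})D_{ij}w$, and you correctly estimate $|G(x)|\le C|x|^{-2\varepsilon}$ (the $g-1$ term decays faster since $\beta>2$). But Lemma~\ref{lem:potential} requires the source to decay with exponent $\delta>2$, and here $\delta = 2\varepsilon<1$. You acknowledge this (``a single application is not enough'') and propose a multi-stage iteration, but the iteration never gets off the ground: the first stage already needs $2\varepsilon>2$, so there is no ``initial'' application of Lemma~\ref{lem:potential} that produces any improvement. Moreover your target formula $\mu\mapsto 2-\min\{2(2-\mu),n,\beta\}$, evaluated at $\mu=2-\varepsilon$, would go from $2-\varepsilon$ directly to $2-2\varepsilon$ in one step, so it is not genuinely an iteration — it is a single application whose hypothesis fails. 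Your related worry about whether $w_\infty$ exists is a symptom of the same problem: at this decay level there is no reason for $w$ to have a limit at infinity.

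The paper sidesteps all of this. Differentiate $F(D^2u)=g$ once with respect to $x_r$ and once more with respect to $x_s$, and write the result as a constant-coefficient equation for $h_1=D_{rs}u$,
\[
\bar a_i D_{ii}h_1 = g_1 := D_{rs}g - D_s(a_{ij})D_{ij}(D_ru) - (a_{ij}-\bar a_i\delta_{ij})D_{ij}h_1.
\]
Using Lemma~\ref{lem:deri-est} to estimate every factor, one gets $|g_1|\le C|x|^{-2-2\varepsilon}$ — two powers better than your $G$, precisely because differentiating twice costs $|x|^{-2}$ in every term. Now $\delta=2+2\varepsilon>2$, and since $n\ge 3$ and $2\varepsilon<1$ one has $\min\{\delta,n\}=2+2\varepsilon\neq n$. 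The limit-at-infinity hypothesis of Lemma~\ref{lem:potential} is also free here: Lemma~\ref{lem:deri-est} gives $D^2w\to 0$, so $h_1\to a_r\delta_{rs}$. Applying Lemma~\ref{lem:potential} to $h_1$ yields $|D^2w|\le C|x|^{-2\varepsilon}$, then integration gives $|w|\le C|x|^{2-2\varepsilon}$, and a final reapplication of Lemma~\ref{lem:deri-est} to $w$ upgrades all derivatives to the claimed rates. In short, the missing idea is to apply the potential-theoretic lemma to $D_{rs}u$ rather than to $w$ itself; without that, the exponent bookkeeping you propose cannot start.
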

\begin{proof}
Differentiating the equation
\begin{equation}\label{eq:F(D^2v)=g}
F(D^2u)=g\quad\text{in}\ \mathbb{R}^n\setminus\overline{B}_{r_0}
\end{equation}
with respect to $x_r$ gives
\begin{equation}\label{eq:D_ru}
a_{ij}(x)D_{ij}(D_{r}u)(x)=D_{r}g(x)\quad\text{for}\ |x|>r_0,
\end{equation}
where $a_{ij}(x)=F_{ij}(D^2u(x))$. Lemma \ref{lem:deri-est} implies for $|x|>r_1$
\begin{equation}\label{eq:a_ij-est}
|a_{ij}(x)-\bar{a}_i\delta_{ij}|\leq C|x|^{-\varepsilon}\quad\text{and}\quad|Da_{ij}(x)|\leq C|x|^{-1-\varepsilon},
\end{equation}
where
$$\bar{a}_i=F_{ii}(A)=\frac{1}{k}\sigma_{k-1;i}(a)\quad\text{and}\quad\sigma_{k-1;i}(a)=\sigma_{k-1}(a)|_{a_i=0}>0.$$
Moreover, differentiating \eqref{eq:F(D^2v)=g} with respect to $x_r$ and $x_s$ gives
$$a_{ij}D_{ij}(D_{rs}u)=D_{rs}g-D_s(a_{ij})D_{ij}(D_ru)\quad\text{in}\ \mathbb{R}^n\setminus\overline{B}_{r_1}.$$
Letting $h_1=D_{rs}u$, we rewrite the above equation as
$$\bar{a}_iD_{ii}h_1=g_1:=D_{rs}g-D_{s}(a_{ij})D_{ij}(D_ru)-(a_{ij}-\bar{a}_i\delta_{ij})D_{ij}h_1.$$
It follows from \eqref{C2}, \eqref{eq:a_ij-est} and Lemma \ref{lem:deri-est} that
\begin{equation}\label{eq:g_1-est}
|g_1(x)|\leq C|x|^{-2-2\varepsilon}\quad\text{for}\ |x|>r_1.
\end{equation}
By Lemma \ref{lem:deri-est}, we have $|D^2w(x)|\rightarrow0$ as $|x|\rightarrow\infty$, thus $h_1(x)\rightarrow a_r\delta_{rs}$ as $|x|\rightarrow\infty$.
Since $2\varepsilon<1$, Lemma \ref{lem:potential} yields that
$$|h_1(x)-a_r\delta_{rs}|\leq C|x|^{-2\varepsilon}\quad\text{for}\ |x|>r_2,$$
and so $|D^2w(x)|\leq C|x|^{-2\varepsilon}$. We thus get
$$|w|\leq C|x|^{2-2\varepsilon}\quad\text{for}\ |x|>r_2.$$
Applying Lemma \ref{lem:deri-est} to $w$, we have the result.
\end{proof}

Now we proceed with the proof of Proposition \ref{thm:asymptotic}.

\begin{proof}[Proof of Proposition \ref{thm:asymptotic}]
It suffices to prove for $\varepsilon>0$ small. Let $k_0$ be a positive integer such that $2^{k_0}\varepsilon<1$ and $2^{k_0+1}\varepsilon>1$. Let $\varepsilon_1=2^{k_0}\varepsilon$, then $1<2\varepsilon_1<2$. Applying Lemma \ref{lem:deri-est-2} $k_0$ times, we obtain for $m=0,1,2,3,4$,
\begin{equation}\label{eq:Dw-est-2}
|D^mw(x)|\leq C|x|^{2-\varepsilon_1-m}\quad\text{for}\ |x|>r_3.
\end{equation}
Here and in the following, $r_3$ denotes some sufficiently large constant depending only on $n$, $k$, $A$, $r_0$, $\varepsilon$, $\beta$, $m$, $c_0$ and $c_1$.

Let $h_1$ and $g_1$ be as in Lemma \ref{lem:deri-est-2}, i.e. $h_1=D_{rs}u$ and
$$g_1=D_{rs}g-D_{s}(a_{ij})D_{ij}(D_ru)-(a_{ij}-\bar{a}_i\delta_{ij})D_{ij}h_1.$$
In view of \eqref{eq:Dw-est-2}, corresponding to the proof in Lemma \ref{lem:deri-est-2}, we have
\begin{equation}\label{eq:a_ij-est-2}
|a_{ij}(x)-\bar{a}_i\delta_{ij}|\leq C|x|^{-\varepsilon_1}\quad\text{and}\quad|Da_{ij}(x)|\leq C|x|^{-1-\varepsilon_1}.
\end{equation}
It follows from \eqref{C2}, \eqref{eq:Dw-est-2} and \eqref{eq:a_ij-est-2} that
$$|g_1(x)|\leq C(|x|^{-2-\beta}+|x|^{-2-2\varepsilon_1})\leq C|x|^{-2-2\varepsilon_1}\quad\text{for}\ |x|>r_3.$$
Applying Lemma \ref{lem:potential} to $h_1$ and $g_1$, we get
$$|h_1(x)-a_r\delta_{rs}|=O(|x|^{-2\varepsilon_1}+|x|^{2-n})\quad\text{as}\ |x|\to\infty.$$
and so
$$|D^2w(x)|=O(|x|^{-1})\quad\text{as}\ |x|\to\infty.$$
Recalling that $D_ru$ satisfies the equation \eqref{eq:D_ru}, \cite[Theorem 4]{Gilbarg-Serrin 1955} yields that
$$D_rw(x)\rightarrow b_r\quad\text{as}\ |x|\rightarrow\infty$$
for some constant $b_r$. Denote $b=\lim_{|x|\rightarrow\infty}\nabla w(x)$.

Let
$$\widetilde{w}(x)=w(x)-b\cdot x\quad\text{for}\ |x|>r_3.$$
Then
\begin{equation}\label{eq:D_ew_1}
\tilde{a}_{ij}(x)D_{ij}\widetilde{w}=F(A+D^2\widetilde{w})-F(A)=g(x)-1\quad\text{for}\ |x|>r_3,
\end{equation}
where
$$\tilde{a}_{ij}(x)=\int_0^1F_{ij}(A+sD^2\widetilde{w}(x))\text{d}s.$$
For $e\in\mathbb{S}^{n-1}$, applying $D_e$ to $F(A+D^2\widetilde{w}(x))=g(x)$, we have
$$a_{ij}D_{ij}(D_e\widetilde{w})=D_eg\quad\text{in}\ \mathbb{R}^n\setminus\overline{B}_{r_3}.$$
Let $h_2=D_e\widetilde{w}$, we rewrite the above equation as
$$\bar{a}_iD_{ii}h_2=g_2:=D_eg-(a_{ij}-\bar{a}_i\delta_{ij})D_{ij}(D_e\widetilde{w})\quad\text{for}\ |x|>r_3.$$
Here $a_{ij}$ and $\bar{a}_i$ are as in Lemma \ref{lem:deri-est-2}.
It follows from \eqref{C2},  \eqref{eq:Dw-est-2} and \eqref{eq:a_ij-est-2} that
$$|g_2(x)|\leq C(|x|^{-\beta-1}+|x|^{-1-2\varepsilon_1})\leq C|x|^{-1-2\varepsilon_1}\quad\text{for}\ |x|>r_3.$$
Applying Lemma \ref{lem:potential} to $h_2$ and $g_2$, we get
$$|h_2(x)|\leq C(|x|^{1-2\varepsilon_1}+|x|^{2-n})\leq C|x|^{1-2\varepsilon_1}\quad\text{for}\ |x|>r_3.$$
That is
$$|\nabla\widetilde{w}(x)|\leq C|x|^{1-2\varepsilon_1}\quad\text{for}\ |x|>r_3.$$
Hence
$$|\widetilde{w}(x)|\leq C|x|^{2-2\varepsilon_1}\quad\text{for}\ |x|>r_3.$$
Applying Lemma \ref{lem:deri-est} to $\widetilde{w}$, we obtain for $m=0,1,2,3,4$,
\begin{equation}\label{eq:Dw-est-3}
|D^m\widetilde{w}(x)|\leq C|x|^{2-2\varepsilon_1-m}\quad\text{for}\ |x|>r_3.
\end{equation}
This implies
\begin{equation}\label{eq:a_ij-est-3}
|a_{ij}(x)-\bar{a}_i\delta_{ij}|\leq C|x|^{-2\varepsilon_1}\quad\text{and}\quad|Da_{ij}(x)|\leq C|x|^{-1-2\varepsilon_1}.
\end{equation}
From \eqref{C2}, \eqref{eq:Dw-est-3} and \eqref{eq:a_ij-est-3}, we have the new estimate of $g_2$
$$|g_2(x)|\leq C(|x|^{-1-\beta}+|x|^{-1-4\varepsilon_1}).$$
Since $h_2(x)\rightarrow0$ as $|x|\rightarrow\infty$, we apply Lemma \ref{lem:potential} to $h_2$ and $g_2$ and obtain the new estimate of $h_2$
\begin{equation*}
|h_2(x)|\leq
\begin{cases}
C(|x|^{1-\beta}+|x|^{1-4\varepsilon_1}+|x|^{2-n}),&\text{if}\ \min\{\beta,4\varepsilon_1\}\neq n-1,\\
C|x|^{2-n}\ln |x|,&\text{if}\ \min\{\beta,4\varepsilon_1\}=n-1.
\end{cases}
\end{equation*}
We thus have
$$|\nabla\widetilde{w}(x)|=O(|x|^{-1})\quad\text{as}\ |x|\rightarrow\infty.$$
In view of \eqref{eq:D_ew_1}, we apply \cite[Theorem 4]{Gilbarg-Serrin 1955} to $\widetilde{w}$ and obtain
$$\lim_{|x|\rightarrow\infty}\widetilde{w}(x)=c$$
for some constant $c$. We thus conclude that
$$\lim_{|x|\rightarrow\infty}\left|u(x)-\left(\frac{1}{2}x^TAx+b\cdot x+c\right)\right|=0.$$

Let
$$\overline{w}(x)=\widetilde{w}(x)-c\quad\text{for}\ |x|>r_3.$$
Then $|\overline{w}(x)|\leq C$ for $|x|>r_3$. Applying Lemma \ref{lem:deri-est} to $\overline{w}$, we obtain for $m=0,1,2,3,4$,
\begin{equation}\label{eq:Dw-est-5}
|D^m\overline{w}(x)|\leq C|x|^{-m}\quad\text{for}\ |x|>r_3.
\end{equation}
This implies
\begin{equation}\label{eq:a_ij-est-4}
|a_{ij}(x)-\bar{a}_i\delta_{ij}|\leq C|x|^{-2}.
\end{equation}
Clearly,
$$a_{ij}D_{ij}\overline{w}=F(A+D^2\overline{w})-F(A)=g-1.$$
We rewrite the above equation as
$$\bar{a}_iD_{ii}\overline{w}=g_3:=g-1-(a_{ij}-\bar{a}_i\delta_{ij})D_{ij}\overline{w}.$$
It follows from \eqref{C2}, \eqref{eq:Dw-est-5} and \eqref{eq:a_ij-est-4} that
$$|g_3(x)|\leq C(|x|^{-\beta}+|x|^{-4})\quad\text{for}\ |x|>r_3.$$
Applying Lemma \ref{lem:potential} to $\overline{w}$ and $g_3$, we obtain
\begin{equation}\label{eq:Dw-est-4}
|\overline{w}(x)|=
\begin{cases}
O(|x|^{2-\beta}+|x|^{-2}+|x|^{2-n}),&\text{if}\ \min\{\beta,4\}\neq n,\\
O(|x|^{2-n}\ln |x|),&\text{if}\ \min\{\beta,4\}=n,
\end{cases}
\end{equation}
as $|x|\to\infty$. It is easy to see that the proof is finished for $\min\{\beta,n\}\leq4$. If $\min\{\beta,n\}>4$, then
$$|\overline{w}(x)|=O(|x|^{-2})\quad\text{as}\ |x|\to\infty.$$
Applying Lemma \ref{lem:deri-est} to $\overline{w}$, we obtain for $m=0,1,2,3,4$,
\begin{equation}\label{eq:Dw-est-6}
|D^m\overline{w}(x)|\leq C|x|^{-2-m}\quad\text{for}\ |x|>r_3.
\end{equation}
This implies
\begin{equation}\label{eq:a_ij-est-5}
|a_{ij}(x)-\bar{a}_i\delta_{ij}|\leq C|x|^{-4}.
\end{equation}
From \eqref{C2}, \eqref{eq:Dw-est-6} and \eqref{eq:a_ij-est-5}, we have the new estimate of $g_3$
$$|g_3(x)|\leq C(|x|^{-\beta}+|x|^{-8})\quad\text{for}\ |x|>r_3.$$
Applying Lemma \ref{lem:potential} to $\overline{w}$ and $g_3$, we obtain
\begin{equation*}
|\overline{w}(x)|=
\begin{cases}
O(|x|^{2-\beta}+|x|^{-6}+|x|^{2-n}),&\text{if}\ \min\{\beta,8\}\neq n,\\
O(|x|^{2-n}\ln |x|),&\text{if}\ \min\{\beta,8\}=n,
\end{cases}
\end{equation*}
as $|x|\to\infty$. Apply the same argument as above finite times, we can remove the term $|x|^{-2}$ from \eqref{eq:Dw-est-4}. Eventually, we get
\begin{equation*}
|\overline{w}(x)|=
\begin{cases}
O(|x|^{2-\beta}+|x|^{2-n}),&\text{if}\ \beta\neq n,\\
O(|x|^{2-n}\ln |x|),&\text{if}\ \beta=n,
\end{cases}
\end{equation*}
as $|x|\to\infty$. This completes the proof of Proposition \ref{thm:asymptotic}.
\end{proof}

\section{Proof of Theorem \ref{thm:main-thm}}\label{sec-4}

In this section, we shall prove Theorem \ref{thm:main-thm}. We mainly follow the idea of \cite{CL 2003} and \cite{JX 2016}, and suitably modify for our situation.

Let $A=\text{diag}(a_1,\cdots,a_n)\in\mathcal{A}_k$. Following the notations used in \cite{BLL 2014}, we call $u$ a generalized symmetric function with respect to $A$ if it is a function of $s=\frac{1}{2}\sum_{i=1}^na_ix_i^2$, that is
$$u(x)=u(s):=u\left(\frac{1}{2}\sum_{i=1}^na_ix_i^2\right).$$
We denote $u'(s)=\frac{\mathrm{d}u}{\mathrm{d}s}$ and $u''(s)=\frac{\mathrm{d^2}u}{\mathrm{d}s^2}$.

Since $f$ satisfies \eqref{C1}-\eqref{C2}, there exist $C_0$, $s_0>1$ such that for $s\geq s_0$,
$$f(x)\leq\overline{f}(x)=\overline{f}(s):=1+C_0s^{-\frac{\beta}{2}},$$
and
$$f(x)\geq\underline{f}(x)=\underline{f}(s):=1-C_0s^{-\frac{\beta}{2}}>0.$$
For $s>0$, let
$$D_s=\left\{x\in\mathbb{R}^n:\frac{1}{2}x^TAx<s\right\}.$$
For the case that $f\in C^\infty(\mathbb{R}^n)$, let $u_s\in C^\infty(\overline{D}_s)$ be the unique $k$-convex solution of
\begin{equation*}
\begin{cases}
\sigma_{k}(\lambda(D^2u_s))=f&\mathrm{in}~D_s,\\
u_s=s&\mathrm{on}~\partial D_s.
\end{cases}
\end{equation*}
Here the existence of $u_s$ is guaranteed by \cite[Theorem 3]{CNS 1985}.

\begin{lemma}\label{lem:barrier}
Let $n\geq3$ and $f\in C^\infty(\mathbb{R}^n)$ satisfy \eqref{C1}-\eqref{C2}. Then for $s\geq s_0$, there exists a positive constant $C_1$ such that
\begin{equation}\label{eq:u_s-bdd0}
\sup_{D_s}\left|u_s(x)-\frac{1}{2}\sum_{i=1}^na_ix_i^2\right|<C_1,
\end{equation}
where $C_1$ depends only on $n$, $k$, $A$, $C_0$, $s_0$, $\beta$ and $\|f\|_{L^{\frac{n}{k}}(D_{s_0})}$.
\end{lemma}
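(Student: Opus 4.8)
The plan is to construct explicit radial-type sub- and supersolutions in the $s$-variable and compare them with $u_s$. The key observation is that for a generalized symmetric function $u = u(s)$ with $s = \frac12\sum a_i x_i^2$, one can compute $\sigma_k(\lambda(D^2u))$ explicitly as a function of $s$, $u'(s)$ and $u''(s)$. Indeed, writing the Hessian in terms of $u'$ and $u''$, a direct calculation gives that $D^2u$ has eigenvalues governed by $u'(s)$ (with the relevant multiplicity pattern weighted by the $a_i$) plus a rank-one correction coming from $u''(s)$; using the identity $\sigma_k(\lambda + t\, v\otimes v) = \sigma_k(\lambda) + t\,\sigma_{k-1}(\lambda; v)$ one finds that $\sigma_k(\lambda(D^2u)) = (u'(s))^k\sigma_k(a) + (u'(s))^{k-1}u''(s)\cdot(2s)\cdot(\text{something like }\sigma_{k-1}(a))$ up to constants, which since $A\in\mathcal A_k$ means $\sigma_k(a)=1$. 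In particular the choice $u'(s)\equiv 1$, $u''(s)\equiv 0$, i.e. $u(s)=s$, is exactly a solution of $\sigma_k = 1$. This is the backbone: perturbations of $u(s)=s$ of the form $u(s) = s + \varphi(s)$ with $\varphi$ small will have $\sigma_k$ close to $1$, controllably.

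First I would record the ODE: if $\overline u(s)$ is a generalized symmetric function then $\sigma_k(\lambda(D^2\overline u))$ equals an explicit expression $G(s,\overline u'(s),\overline u''(s))$; plugging in $\overline u' = 1 + \psi$ for small $\psi$ shows $G = 1 + k\psi + (\text{lower order}) + c\,s\,\psi'$ for a positive constant $c$. Then I seek a supersolution of the form $\overline u_s(x) = \frac12\sum a_i x_i^2 + \eta(s)$ where $\eta$ solves (or supersolves) the corresponding ODE with right-hand side $\overline f(s) = 1 + C_0 s^{-\beta/2}$ and boundary condition $\eta(s)=0$ at the outer sphere $\partial D_s$. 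Because $\beta>2$, the ODE inhomogeneity $C_0 s^{-\beta/2}$ is integrable enough that $\eta$ and $\eta'$ stay bounded (this is precisely where $\beta > 2$ enters, giving convergent integrals of $s^{-\beta/2}$ against the relevant kernels $s^{-1}$); one can even solve it in closed form by two integrations, obtaining $|\eta(s)| \le C_1$ with $C_1$ depending only on $n,k,A,C_0,s_0,\beta$. Symmetrically, $\underline u_s(x) = \frac12\sum a_i x_i^2 - \eta(s)$ (or a slightly different $\underline\eta$ built from $\underline f = 1 - C_0 s^{-\beta/2}$) gives a subsolution. Then the comparison principle for the $k$-Hessian operator (valid since both $u_s$ and the barriers are $k$-convex, and $\underline f \le f \le \overline f$) squeezes $u_s$ between $\tfrac12\sum a_i x_i^2 - C_1$ and $\tfrac12\sum a_i x_i^2 + C_1$ in $D_s$, which is the claimed bound \eqref{eq:u_s-bdd0}.

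One subtlety is the behavior near $s = s_0$: the estimates $\overline f, \underline f$ from \eqref{C1}-\eqref{C2} only hold for $s\ge s_0$, so on $D_{s_0}$ I cannot use the ODE barrier directly. There I would instead bound $u_s$ using a separate local argument — compare $u_s$ on $\overline{D}_{s_0}$ against the solution of the $k$-Hessian equation with data $u_s|_{\partial D_{s_0}}$ (already controlled from the outer region) and use the Aleksandrov–Bakelman–Pucci type / $L^{n/k}$ estimate for $k$-Hessian operators, which explains why the constant $C_1$ is allowed to depend on $\|f\|_{L^{n/k}(D_{s_0})}$. Concretely: first run the barrier comparison on the annular region $D_s\setminus \overline{D}_{s_0}$ to get $|u_s - \frac12 x^TAx|\le C$ on $\partial D_{s_0}$, then glue with a global $L^{n/k}$ bound on $D_{s_0}$ to propagate the control inward.

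The main obstacle I expect is the explicit computation of $\sigma_k(\lambda(D^2u))$ for a generalized symmetric $u=u(s)$ and the subsequent verification that the associated second-order ODE is genuinely well-posed with the right sign structure (so that the comparison principle applies and the barriers are ordered correctly), especially checking that the "effective" linearized operator $k\psi + c\,s\,\psi'$ has a positive coefficient $c$ so that the inhomogeneous solutions decay rather than grow — this is what converts $\beta>2$ into a uniform bound. Once that ODE analysis is in hand, the barrier construction is two integrations and the rest is the comparison principle plus the standard $L^{n/k}$ estimate on the fixed domain $D_{s_0}$; those steps are routine. I would also need to double check that $u_s = s$ is consistent with $A\in\mathcal A_k$ (it is, since $\sigma_k(\lambda(A)) = \sigma_k(a) = 1$), so that $\tfrac12 x^T A x$ itself is the natural comparison function and the perturbation $\eta$ is genuinely small of size $O(s^{1-\beta/2})\to 0$.
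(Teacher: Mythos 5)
Your overall strategy — explicit generalized-symmetric barriers in the annular region, an $L^{n/k}$/ABP-type estimate on the fixed core $D_{s_0}$, and the comparison principle — is the same as the paper's. But there are three concrete points where the sketch glosses over genuine difficulties that the paper has to address, and at least one claim that is not quite right as stated.

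First, the claim that for a generalized symmetric $u=u(s)$ the operator $\sigma_k(\lambda(D^2u))$ reduces to a function of $(s,u',u'')$ alone is false for general $A\in\mathcal A_k$ and $k<n$. The exact formula (from \cite{BLL 2014}, Lemma 1.3) is
$\sigma_k(\lambda(D^2u))=(u')^k+u''(u')^{k-1}\sum_{i=1}^n\sigma_{k-1;i}(a)(a_ix_i)^2$, and the sum $\sum_i\sigma_{k-1;i}(a)(a_ix_i)^2$ is \emph{not} a multiple of $s=\frac12\sum a_ix_i^2$ unless $A$ is a multiple of $I$ or $k=n$. It is anisotropic; the paper replaces it by $2h_k(a)\,s$ with $h_k(a)=\max_i a_i\sigma_{k-1;i}(a)$, which gives a usable one-sided inequality \emph{only when} $u''$ has a fixed sign. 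That is exactly why the paper forces $\underline u''<0$ (by choosing $H_2$ large) and why the exponent $\kappa=k/(2h_k(a))$ appears in the explicit integrals. Your "up to constants" is hiding the step where one must check that the chosen barrier has $u''$ of a single sign so the inequality goes the right way; without that, "solve the ODE" does not directly produce a sub- or supersolution of the PDE.

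Second, the ABP/$L^{n/k}$ argument on $D_{s_0}$ is not a plug-and-play citation for the $k$-Hessian operator. The paper goes through Monge--Ampère: it uses the Maclaurin inequality to dominate the $k$-Hessian Dirichlet solution $v_1$ by a Monge--Ampère solution $v_2$ with right-hand side $(f+c_0\eta)^{n/k}$, and then applies the Alexandrov maximum principle to $v_2$. This is where the $\|f\|_{L^{n/k}(D_{s_0})}$ dependence comes from. Your sketch says "use the ABP type / $L^{n/k}$ estimate for $k$-Hessian operators" but does not explain that detour, which is a genuine step.

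Third, the gluing across $\partial D_{s_0}$ needs care: a continuous concatenation of two smooth pieces is a subsolution across the interface only if the normal derivative jumps the right way (a convex kink). The paper checks this explicitly (the inequality \eqref{eq:nu+-}, using $H_2$ large, and for the supersolution the vanishing of $\overline u'$ from both sides at $s_0$, together with Hopf's lemma to rule out a boundary touching). Your proposal of "first run the barrier comparison on the annulus, then glue with a global $L^{n/k}$ bound on $D_{s_0}$" skips this compatibility check and would not, as written, produce a single barrier to which the comparison principle on all of $D_s$ can be applied.

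In short: correct skeleton and the same route as the paper, but the three items above — the anisotropy of the generalized-symmetric formula and the ensuing sign requirement on $u''$, the Maclaurin + Monge--Ampère ABP detour, and the derivative-jump compatibility at $\partial D_{s_0}$ — are not routine and are precisely the nontrivial content of the paper's proof. As a blind sketch it identifies the right ideas but does not close these gaps.
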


\begin{proof}
We begin by constructing subsolutions and supersolutions. Let $\eta$ be a nonnegative smooth function supported in $D_{\frac{s_0}{4}}$ satisfying $\|\eta\|_{L^{\frac{n}{k}}(D_{s_0})}=1$
and $v_1\in C^\infty(\overline{D}_{s_0})$ be the $k$-convex solution of
\begin{equation*}
\begin{cases}
\sigma_k(\lambda(D^2v_1))=f+c_0\eta&\text{in}\ D_{s_0},\\
v_1=0&\text{on}\ \partial D_{s_0},
\end{cases}
\end{equation*}
where $c_0>0$ will be chosen later. Let $v_2\in C^\infty(\overline{D}_{s_0})$ be the convex solution of
\begin{equation*}
\begin{cases}
\text{det}(D^2v_2)=(f+c_0\eta)^{\frac{n}{k}}&\text{in}\ D_{s_0},\\
v_2=0&\text{on}\ \partial D_{s_0}.
\end{cases}
\end{equation*}
By the Maclaurin inequality, we have
$$(\text{det}(D^2v_2))^{\frac{1}{n}}\leq\left(\frac{1}{C_n^{k}}\sigma_{k}(\lambda(D^2v_2))\right)^\frac{1}{k},$$
where $C_n^k=\frac{n!}{k!(n-k)!}$. Thus
$$\sigma_k(\lambda(D^2v_2))\geq f+c_0\eta\quad\text{in}\ D_{s_0}.$$
By the comparison principle, we have  $v_1\geq v_2$ in $D_{s_0}$. It follows from the Alexandrov's maximum principle (see, e.g., \cite[Theorem 1.4.2]{Gutierrez 2001}) that
\begin{equation*}
\begin{split}
v_2&\geq-C(n,A,s_0)\left(\int_{D_{s_0}}(f(x)+c_0\eta(x))^{\frac{n}{k}}\text{d}x\right)^{\frac{1}{n}}\\
&\geq-C(n,A,s_0)\left(\|f\|_{L^{\frac{n}{k}}(D_{s_0})}+c_0\right)^{\frac{1}{k}}\\
&=:-c_1\quad\text{in}\ D_{\frac{s_0}{2}}.
\end{split}
\end{equation*}
Let
$$\tau=\frac{1}{2}\sum_{i=1}^na_ix_i^2,\quad c_2= \int_{\frac{s_0}{2}}^{s_0}\left(\int_{\frac{s_0}{2}}^{t}nr^{n-1}\overline{f}^{\frac{n}{k}}(r)\text{d}r\right)^{\frac{1}{n}}\text{d}t,\quad H_1=\frac{c_1}{c_2}$$
and
\begin{equation*}
v_3(x)=
\begin{cases}
-c_1,&\text{for}\ 0\leq\tau<\frac{s_0}{2},\\
\displaystyle H_1\int_{s_0}^{\tau}\left(\int_{\frac{s_0}{2}}^{t}nr^{n-1}\overline{f}^{\frac{n}{k}}(r)\text{d}r\right)^{\frac{1}{n}}\text{d}t,&\text{for}\ \frac{s_0}{2}\leq\tau\leq s_0.
\end{cases}
\end{equation*}
Firstly, $v_2\geq v_3$ in $\overline{D}_{\frac{s_0}{2}}$. Secondly, using the explicit formula \cite[Lemma 1.3]{BLL 2014} that Hessian operators act on generalized symmetric functions, direct computation shows
\begin{equation*}
\begin{split}
\text{det}(D^2v_3)&=\det(A)(v_3')^n+v_3''(v_3')^{n-1}\sum_{i=1}^n\sigma_{n-1;i}(a)(a_ix_i)^2\\
&\geq2\text{det}(A)v_3''(v_3')^{n-1}\tau\\
&\geq\text{det}(A)H_1^n\overline{f}^{\frac{n}{k}}\quad\text{in}\ D_{s_0}\setminus\overline{D}_{\frac{s_0}{2}},
\end{split}
\end{equation*}
where $a=(a_1,\cdots,a_n)$ and $\sigma_{n-1;i}(a)=\sigma_{n-1}(a)|_{a_i=0}$. By taking $c_0$ sufficiently large such that $c_1\geq c_2(\text{det}(A))^{-\frac{1}{n}}$, we have
$$\text{det}(D^2v_3)\geq\overline{f}^{\frac{n}{k}}\geq f^{\frac{n}{k}}=\text{det}(D^2v_2)\quad\text{in}\ D_{s_0}\setminus\overline{D}_{\frac{s_0}{2}},$$
and $v_2=v_3=0$ on $\partial D_{s_0}$. By the comparison principle, we have $v_2\geq v_3$ in $D_{s_0}\setminus\overline{D}_{\frac{s_0}{2}}$, and so $v_2\geq v_3$ in $D_{s_0}$.

Let
\begin{equation*}
\underline{u}(x)=
\begin{cases}
v_1(x),&\text{for}\ 0\leq\tau<s_0,\\
\displaystyle \int_{s_0}^\tau\left(t^{-\kappa}\left(\int_{s_0}^t\kappa r^{\kappa-1}\overline{f}(r)\text{d}r+H_2\right)\right)^\frac{1}{k}\text{d}t,&\text{for}\ \tau\geq s_0,
\end{cases}
\end{equation*}
where $\kappa=\frac{k}{2h_k(a)}$, $h_k(a)=\max\limits_{1\leq i\leq n}a_i\sigma_{k-1;i}(a)$
and $H_2>0$ will be chosen later.
Then $\underline{u}\in C^0(\mathbb{R}^n)\cap C^\infty(\overline{D}_{s_0})\cap C^\infty(\mathbb{R}^n\setminus D_{s_0})$,
\begin{equation}\label{eq:underline_u1}
\sigma_k(\lambda(D^2\underline{u}))\geq f\quad\text{in}\ D_{s_0}.
\end{equation}
For $\tau>s_0$, it follows that
$$\underline{u}'(\tau)=\left(\tau^{-\kappa}\left(\int_{s_0}^\tau\kappa r^{\kappa-1}\overline{f}(r)\text{d}r+H_2\right)
\right)^{\frac{1}{k}}>0$$
and
\begin{equation*}
\begin{split}
\underline{u}''(\tau)&=-\frac{\tau^{-\frac{k}{2h_k}-1}}{2h_k(a)}(\underline{u}')^{1-k}
\left(\int_{s_0}^\tau\kappa r^{\kappa-1}\overline{f}(r)\text{d}r+H_2-\tau^{\kappa}\overline{f}(\tau)\right)\\
&=:-\frac{\tau^{-\frac{k}{2h_k}-1}}{2h_k(a)}(\underline{u}')^{1-k}\mathcal{H}(\tau),
\end{split}
\end{equation*}
where
\begin{equation*}
\mathcal{H}(\tau)=
\begin{cases}\displaystyle
H_2+\frac{C_0\beta}{2\kappa-\beta}\tau^{\kappa-\frac{\beta}{2}}-\frac{2C_0\kappa}{2\kappa-\beta}s_0^{\kappa-\frac{\beta}{2}}-s_0^{\kappa},&\text{if}\ \kappa\neq\frac{\beta}{2},\\
H_2+C_0\kappa\ln\tau-C_0\kappa\ln s_0-s_0^{\kappa}-C_0,&\text{if}\ \kappa=\frac{\beta}{2}.
\end{cases}
\end{equation*}
Therefore, there exists $\widetilde{H}>0$, depending only on $C_0$, $s_0$, $\kappa$ and $\beta$, such that $\underline{u}''<0$ when $H_2\geq\widetilde{H}$. Using \cite[Lemma 1.3]{BLL 2014} again, we have
\begin{equation}\label{eq:underline_u2}
\begin{split}
\sigma_k(\lambda(D^2\underline{u}))&=(\underline{u}')^k+\underline{u}''(\underline{u}')^{k-1}\Sigma_{i=1}^n\sigma_{k-1;i}(a)(a_ix_i^2)\\
&\geq(\underline{u}')^k+2h_k(a)\underline{u}''(\underline{u}')^{k-1}\tau\\
&=\overline{f}\quad\text{in}\ \mathbb{R}^n\setminus\overline{D}_{s_0}.
\end{split}
\end{equation}
Moreover, we have
$$\underline{u}\geq v_3\quad\text{in}\ D_{s_0}\quad\text{and}\quad\underline{u}=v_3\quad\text{on}\ \partial D_{s_0}.$$
Then
$$\lim_{t\to0^+}\frac{\underline{u}(x)-\underline{u}(x-t\nu)}{t}\leq\lim_{t\to0^+}\frac{v_3(x)-v_3(x-t\nu)}{t}
\quad\text{for}\ x\in\partial D_{s_0},$$
where $\nu$ is the unit outer normal vectors of $\partial D_{s_0}$. By taking $H_2\geq\widetilde{H}$ sufficiently large, we have
$$\lim_{\tau\rightarrow s_0^-}v_3'(\tau)=H_1\left(\int_{\frac{s_0}{2}}^{s_0}nr^{n-1}\overline{f}^{\frac{n}{k}}(r)\text{d}r\right)^{\frac{1}{n}}
<(H_2s_0^{-\kappa})^{\frac{1}{k}}
=\lim_{\tau\rightarrow s_0^+}\underline{u}'(\tau).$$
Hence
\begin{equation}\label{eq:nu+-}
\lim_{t\to0^+}\frac{\underline{u}(x)-\underline{u}(x-t\nu)}{t}<\lim_{t\to0^+}\frac{\underline{u}(x+t\nu)-\underline{u}(x)}{t}
\quad\text{for}\ x\in\partial D_{s_0}.
\end{equation}
Also, by a simple computation, we have
\begin{equation}\label{eq:underline2_u-bdd1}
\sup_{\mathbb{R}^n}\left|\underline{u}(x)-\frac{1}{2}\sum_{i=1}^na_ix_i^2\right|\leq C,
\end{equation}
for some $C\geq1$ depending only on $n$, $k$, $A$, $C_0$, $s_0$, $\beta$ and $\|f\|_{L^{\frac{n}{k}}(D_{s_0})}$.

We proceed to define
\begin{equation*}
\overline{u}(x)=
\begin{cases}
0,&\text{for}\ 0\leq\tau<s_0,\\
\displaystyle \int_{s_0}^\tau\left(t^{-\kappa}\int_{s_0}^t\kappa r^{\kappa-1}\underline{f}(r)\text{d}r\right)^{\frac{1}{k}}\text{d}t,&\text{for}\ \tau\geq s_0.
\end{cases}
\end{equation*}
Then $\overline{u}\in C^0(\mathbb{R}^n)\cap C^{\infty}(\overline{D}_{s_0})\cap C^{\infty}(\mathbb{R}^n\setminus D_{s_0})$
and
\begin{equation}\label{eq:w+deri}
\lim_{\tau\rightarrow s_0^-}\overline{u}'(\tau)=\lim_{\tau\rightarrow s_0^+}\overline{u}'(\tau)=0.
\end{equation}
Arguing as above, we infer that
\begin{equation*}
\begin{split}
\sigma_k(D^2\overline{u})\leq f\quad&\text{in}~D_{s_0},\\
\sigma_k(D^2\overline{u})\leq\underline{f}\quad&\text{in}~\mathbb{R}^n\setminus\overline{D}_{s_0},
\end{split}
\end{equation*}
and
\begin{equation}\label{eq:u+bdd}
\sup_{\mathbb{R}^n}\left|\underline{u}(x)-\frac{1}{2}\sum_{i=1}^na_ix_i^2\right|\leq C.
\end{equation}
We conclude from \eqref{eq:underline2_u-bdd1} and \eqref{eq:u+bdd} that
$$\beta_{-}:=\inf_{\mathbb{R}^n}\left(\frac{1}{2}\sum_{i=1}^na_ix_i^2-\underline{u}(x)\right)>-\infty,$$
and
$$\beta_{+}:=\sup_{\mathbb{R}^n}\left(\frac{1}{2}\sum_{i=1}^na_ix_i^2-\overline{u}(x)\right)<\infty.$$

Next, we will show that for $s>s_0$,
\begin{equation}\label{eq:us-bdd}
\underline{u}(x)+\beta_{-}\leq u_s(x)\leq\overline{u}(x)+\beta_{+}\quad\text{for}~x\in D_s.
\end{equation}
To establish the first inequality, let $\underline{x}$ be a maximum point of the function
$$\underline{h}(x):=\underline{u}(x)+\beta_{-}-u_s(x)$$
in $\overline{D}_s$. It follows that \eqref{eq:underline_u1} and \eqref{eq:underline_u2} that
$$\sigma_k(\lambda(D^2\underline{u}))\geq\sigma_k(\lambda(D^2u_s))\quad\text{in}~D_{s_0},$$
and
$$\sigma_k(\lambda(D^2\underline{u}))\geq\sigma_k(\lambda(D^2u_s))\quad\text{in}~\mathbb{R}^n\setminus\overline{D}_{s_0}.$$
Then we have, by the strong maximum principle, $\underline{x}\in\partial D_{s_0}$ or $\underline{x}\in\partial D_s$. If $\underline{x}\in\partial D_s$, then in view of the boundary data of $u_s$ and the definition of $\beta_{-}$,
$$\underline{h}(x)\leq \underline{h}(\underline{x})=\underline{u}(\underline{x})+\beta_{-}-u_s(\underline{x})\leq\frac{1}{2}\sum_{i=1}^na_i\underline{x}_i^2-s=0
\quad\text{for}\ x\in\overline{D}_{s},$$
and so the inequality holds. If $\underline{x}\in\partial D_{s_0}$, then
$$\lim_{t\to0^+}\frac{h(\underline{x})-h(\underline{x}-t\nu)}{t}\geq0\geq\lim_{t\to0^+}\frac{h(\underline{x}+t\nu)-h(\underline{x})}{t},$$
which contradicts to \eqref{eq:nu+-} by recalling the smoothness of $u_s$.
Hence, the first inequality in \eqref{eq:us-bdd} holds. For the second inequality, let $\overline{x}$ be a minimum point of the function
$$\overline{h}(x):=\overline{u}(x)+\beta_{+}-u_s(x)$$
in $\overline{D}_s$. Again the strong maximum principle yields that $\overline{x}\in\partial D_{s_0}$ or $\overline{x}\in\partial D_s$. If $\overline{x}\in\partial D_s$, then by the definition of $\beta_{+}$, we have
$$\overline{h}(x)\geq\overline{h}(\overline{x})=\overline{u}(\overline{x})+\beta_{+}-u_s(\overline{x})
\geq\frac{1}{2}\sum_{i=1}^na_i\overline{x}_i^2-s=0\quad\text{in}~\overline{D}_s,$$
and so the second inequality holds. If $\bar{x}\in\partial D_{s_0}$, in view of \eqref{eq:w+deri} and the smoothness of $u_s$, we have
$$\nabla u_s(\bar{x})=0,$$
which is impossible due to Hopf's Lemma. Hence, the second inequality in of \eqref{eq:us-bdd} holds.
This completes the proof.
\end{proof}

Note that Lemma \ref{lem:barrier} gives an estimate that depends only on $L^{\frac{n}{k}}$ norm of $f$ instead of some stronger norm of $f$.

\begin{proof}[Proof of Theorem \ref{thm:main-thm}]
We only show the existence part as the uniqueness part follows immediately from the comparison principle. For the existence part, by an orthogonal transformation and subtracting a linear function, we only need to prove for the case that $A=\text{diag}(a_1,\cdots,a_n)$, $b=0$ and $c=0$.
Without loss of generality, we may assume $D\subset D_{\frac{s_0}{2}}$.

First, we explore the proof under the hypothesis that $f\in C^\infty(\mathbb{R}^n)$. We will show that along a sequence $s\rightarrow\infty$, $u_s$ converges to a solution $u$ of \eqref{eq:k-Hessian=f}
satisfying
\begin{equation}\label{eq:u_s-bdd}
\sup_{\mathbb{R}^n}\left|u(x)-\frac{1}{2}\sum_{i=1}^na_ix_i^2\right|\leq C_1,
\end{equation}
where $C_1$ is as in Lemma \ref{lem:barrier}. For this purpose, we are going to derive the locally uniform estimates of $u_s$.

For any fixed compact subset $K$ of $\mathbb{R}^n$, let
$$K_1=\{x\in\mathbb{R}^n:\text{dist}(x,K)\leq1\},$$
$$M=\sup_{K_1}\left(\frac{1}{2}\sum_{i=1}^na_ix_i^2\right)+C_1+1,$$
and
$$\Omega_{s,M}=\{x\in D_s:u_s(x)<M\}.$$
It is easy to check that
\begin{equation}\label{eq:O-R-bdd}
K_1\subset\Omega_{s,M-1}\subset\Omega_{s,M}\subset D_{s_1}\quad\text{for}\ s>s_1
\end{equation}
with $s_1>s_0$ sufficiently large. Indeed, taking $s_1\geq M+C_1$, it follows from Lemma \ref{lem:barrier} that
$$u_s(x)<\frac{1}{2}\sum_{i=1}^na_ix_i^2+C_1\leq M-1\quad\text{in}~K_1,$$
and
$$u_s(x)>\frac{1}{2}\sum_{i=1}^na_ix_i^2-C_1\geq s_1-C_1\geq M\quad\text{in}~D_s\setminus D_{s_1}.$$
Moreover, Lemma \ref{lem:barrier} yields that
$$||u_s||_{L^\infty(D_{s_1+1})}\leq C_1+s_1+1\quad\text{for}\ s\geq s_1+1.$$
By \eqref{eq:O-R-bdd}, we apply the interior gradient estimate in \cite[Theorem 3.2]{Chou-Wang 2001} to $u_s$ in $D_{s_1+1}$  and obtain
$$||\nabla u_s||_{L^{\infty}(\Omega_{s,M})}\leq C(n,k,A,C_1,K,\|f\|_{C^{0,1}(D_{s_1+1})})\quad\text{for}\ s\geq s_1+1.$$
The second derivative estimate in \cite[Theorem 1.5]{Chou-Wang 2001} further yields that
$$(u_s(x)-M)^4|D^2u_s(x)|\leq C(n,k,A,C_1,K,\|f\|_{C^{1,1}(D_{s_1+1})})\quad\text{in}~\Omega_{s,M}.$$
It follows that
$$|D^2u_s|\leq C\quad\text{in}~\Omega_{s,M-1}.$$
Hence, the operator $F$ is uniformly elliptic with respect to $u_s$ in $\Omega_{s,M-1}$. Combining its concavity,
the Evans-Krylov estimates and Schauder theory imply that for $m\geq4$ and $0<\alpha<1$,
$$||u_s||_{C^{m,\alpha}(K)}\leq C(n,k,A,m,\alpha,C_1,K,||f||_{C^{m-2,\alpha}(D_{s_1+1})}).$$
Up to a subsequence $s_i\rightarrow\infty$, we get
$$u_{s_i}\rightarrow u_\infty\quad\text{in}~C^m_{\text{loc}}(\mathbb{R}^n),\quad\forall\ m\geq4.$$
This particularly implies that $u_\infty\in C^\infty(\mathbb{R}^n)$ is a $k$-convex solution of
$$\sigma_k(\lambda(D^2u_\infty))=f\quad\text{in}~\mathbb{R}^n,$$
satisfying
$$\sup_{\mathbb{R}^n}\left|u_\infty(x)-\frac{1}{2}\sum_{i=1}^na_ix_i^2\right|\leq C_1$$
and for $m\geq4$ and $0<\alpha<1$,
$$\|u_\infty\|_{C^{m,\alpha}(K)}\leq C(n,k,A,m,\alpha,C_1,K,||f||_{C^{m-2,\alpha}(D_{s_1+1})}).$$

For general $f\in C^{2,\gamma}_{\text{loc}}(\mathbb{R}^n)$, let $f^\varepsilon=\rho_\varepsilon*f$, where $\rho_\varepsilon$ is the standard mollifier. Let $u^{\varepsilon}_\infty$ be the solution found above for $f^\varepsilon$. From the proof, we see that
$$\sup_{\mathbb{R}^n}\left|u^\varepsilon_\infty(x)-\frac{1}{2}\sum_{i=1}^na_ix_i^2\right|\leq C_1$$
and
$$\|u^\varepsilon_\infty\|_{C^{4,\gamma}(K)}\leq C(n,k,A,\gamma,C_1,K,||f^\varepsilon||_{C^{2,\gamma}(D_{s_1+1})}),\quad\forall\ K\Subset\mathbb{R}^n.$$
Since $f\in C^{2,\gamma}_{\text{loc}}(\mathbb{R}^n)$, we have, up to a subsequence $\varepsilon_i\to0$,
$$u^{\varepsilon_i}_\infty\rightarrow u^0_\infty\quad\text{in}\ C^4_{\text{loc}}(\mathbb{R}^n)$$
for some $k$-convex function $u^0_\infty$. Hence $u_\infty^0\in C^{4,\gamma}_{\text{loc}}(\mathbb{R}^n)$ is a solution of \eqref{eq:k-Hessian=f} and satisfies
\begin{equation}\label{eq:u^0-bdd}
\sup_{\mathbb{R}^n}\left|u^0_\infty(x)-\frac{1}{2}\sum_{i=1}^na_ix_i^2\right|\leq C_1.
\end{equation}
It follows from Proposition \ref{thm:asymptotic} and \eqref{eq:u^0-bdd} that there exists $\tilde{c}\in\mathbb{R}$ such that
\begin{equation*}
\begin{cases}
\limsup_{|x|\rightarrow\infty}|x|^{\min\{\beta,n\}-2}\left|u_{\infty}^0(x)-\left(\frac{1}{2}\sum_{i=1}^na_ix_i^2+\tilde{c}\right)\right|<\infty,&\text{if}\ \beta\neq n.\\
\limsup_{|x|\rightarrow\infty}|x|^{n-2}(\ln|x|)^{-1}\left|u_{\infty}^0(x)-\left(\frac{1}{2}\sum_{i=1}^na_ix_i^2+\tilde{c}\right)\right|<\infty,&\text{if}\ \beta=n.
\end{cases}
\end{equation*}
Then
$$u:=u^0_\infty-\tilde{c}\quad\text{in}\ \mathbb{R}^n$$
is the desired solution. This completes the proof of Theorem \ref{thm:main-thm}.
\end{proof}

\end{document}